\newtheorem{theorem}{Theorem}[section]
\newtheorem{lemma}[theorem]{Lemma}
\newtheorem{definition}[theorem]{Definition}
\newtheorem{example}{Example}[section]
\newtheorem{remark}{Remark}[section]
\newtheorem{proposition}[theorem]{Proposition}
\def\normtwo#1{\Vert#1\Vert_2}
\newcommand{\eps}{\varepsilon_M}
\begin{document}

\begin{frontmatter}

\title{On computing  the symplectic  $LL^T$  factorization}

\author[MB]{Maksymilian Bujok\corref{cor1}}
\ead{mbujok@swps.edu.pl}

\author[AS]{Alicja Smoktunowicz \corref{cor2}}
\ead{A.Smoktunowicz@mini.pw.edu.pl}

\author[MB]{Grzegorz Borowik}
\ead{gborowik@swps.edu.pl}

\cortext[cor2]{Principal corresponding author}
\cortext[cor1]{Corresponding author}

\address[MB]{SWPS University of Social Sciences and Humanities,  Chodakowska 19/31, 03-815 Warsaw, Poland}
\address[AS]{ Faculty of Mathematics and Information Science, Warsaw University of Technology, Koszykowa 75, 00-662 Warsaw, Poland}

\begin{abstract} 
We analyze  two  algorithms  for computing   the symplectic $LL^T$ factorization $A=LL^T$ of a given symmetric positive definite symplectic matrix $A$.
The first algorithm $W_1$ is an implementation of the $HH^T$ factorization from \cite{Dopico}, see Theorem 5.2. The second  one, algorithm $W_2$ 
uses both Cholesky and Reverse Cholesky decompositions of symmetric positive definite matrices.

We presents a comparison of these  algorithms and illustrate  their  properties  by numerical experiments in \textsl{MATLAB}. 
A particular emphasis is given on simplecticity properties of the computed matrices  in floating-point arithmetic.
\end{abstract}

\begin{keyword}
Symplectic matrix \sep orthogonal matrix \sep Cholesky factorization \sep condition number

\MSC[2010] 15B10\sep 15B57\sep 65F25 \sep65F35 
\end{keyword}

\end{frontmatter}

\section{Introduction} %\label{intro-sec}

We study  numerical properties  of two algorithms for computing symplectic  $LL^T$  factorization of a given  symmetric positive definite symplectic matrix $A \in  \mathbb R^{2n \times 2n}$.
A symplectic  factorization  is the  factorization $A=LL^T$, where $L  \in \mathbb R^{2n \times 2n}$ is  block lower triangular and is symplectic. 

Let 
\begin{equation}\label{J}
J_n=\left(
\begin{array}{cc}
 0  &   I_n  \\
 -I_n &   0  
\end{array}
\right),
\end{equation}
where $I_n$ denotes the $n \times n$ identity matrix.  

We will write $J$ and $I$ instead of $J_n$ and $I_n$ when the sizes are clear from the context.

\begin{definition}\label{def1}
A matrix $A \in \mathbb R^{2n \times 2n}$ is symplectic if and only if   $A^TJA=J$.
\end{definition}

\medskip

We can use the symplectic  $LL^T$  factorization to compute the symplectic $QR$ factorization and the  Iwasawa decomposition of a given 
symplectic matrix via Cholesky decomposition. We can modify Tam's method, see \cite{Benzi}, \cite{Tam}.
Symplectic matrices arise in several applications, among which symplectic formulation of classical mechanics and quantum mechanic, 
quantum optics,  various aspects of mathematical physics, including the application of symplectic block matrices to special relativity,
optimal control theory. For more details we refer the reader to \cite{Dopico}, \cite{Benzi}, and  \cite{ Volker}.

Partition  $A \in \mathbb R^{2n \times 2n}$  conformally with $J_n$  defined by (\ref{J}) as    
\begin{equation}\label{blockA}
A=\left(
\begin{array}{cc}
 A_{11}  &  A_{12} \\
 A_{21} &   A_{22} 
\end{array}
\right),
\end{equation}
in which $A_{ij} \in \mathbb R^{n \times n}$ for $i, j=1,2$.

An immediate consequence of Definition \ref{def1} is  that the matrix $A$, partitioned as in (\ref{blockA}),  is symplectic if and only if $A_{11}^T A_{21}$ and $A_{12}^T A_{22}$ are symmetric and $A_{11}^T A_{22}-A_{21}^T A_{12}=I$. 

Symplectic matrices form a Lie group under matrix multiplications. The product $A_1  A_2$ of two symplectic matrices  $A_1, A_2 \in \mathbb R^{2n \times 2n}$ is also a symplectic matrix.
The symplectic group is closed under transposition. If $A$ is symplectic then the inverse of $A$ equals $A^{-1}$ $=J^TA^TJ$, and $A^{-1}$ is also symplectic.

Lemmas \ref{lemma1}--\ref{lemma5} will  be helpful  in the construction and for testing of some herein proposed algorithms.

\begin{lemma}\label{lemma1}
A  nonsingular block lower triangular  matrix $L\in \mathbb R^{2n \times 2n}$, partitioned as 
\begin{equation}\label{L}
L=\left(
\begin{array}{cc}
 L_{11} &   0 \\
 L_{21} &  L_{22}
\end{array}
\right),
\end{equation}
is symplectic  if and only if  $L_{22}=L_{11}^{-T}$ and $L_{21}^T L_{11}=L_{11}^T L_{21}$.
\end{lemma}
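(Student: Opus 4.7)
The plan is to apply the block characterization of symplecticity stated just before Lemma \ref{lemma1}, namely that $A$ partitioned as in (\ref{blockA}) is symplectic if and only if $A_{11}^T A_{21}$ and $A_{12}^T A_{22}$ are symmetric and $A_{11}^T A_{22}-A_{21}^T A_{12}=I$, and specialize it to the case where the $(1,2)$ block vanishes. This reduces the equivalence to an easy calculation, with no real obstacle beyond bookkeeping.

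First I would substitute $A_{11}=L_{11}$, $A_{21}=L_{21}$, $A_{22}=L_{22}$, and $A_{12}=0$ into the three conditions above. The symmetry requirement on $A_{12}^T A_{22}$ becomes vacuous, the symmetry requirement on $A_{11}^T A_{21}$ becomes precisely $L_{11}^T L_{21}=L_{21}^T L_{11}$, and the determinant-style identity collapses to $L_{11}^T L_{22}=I$. Since $L$ is nonsingular and block lower triangular, both $L_{11}$ and $L_{22}$ are nonsingular, so $L_{11}^T L_{22}=I$ is equivalent to $L_{22}=L_{11}^{-T}$. This yields the two stated conditions, establishing the forward direction.

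For the converse, I would simply note that the substitution is reversible: assuming $L_{22}=L_{11}^{-T}$ and $L_{11}^T L_{21}=L_{21}^T L_{11}$, all three conditions of the block characterization hold trivially, hence $L$ is symplectic. Alternatively, one can verify $L^T J L=J$ directly by computing
\[
L^T J L=\begin{pmatrix} L_{11}^T & L_{21}^T \\ 0 & L_{22}^T \end{pmatrix}\begin{pmatrix} L_{21} & L_{22} \\ -L_{11} & 0 \end{pmatrix}=\begin{pmatrix} L_{11}^T L_{21}-L_{21}^T L_{11} & L_{11}^T L_{22} \\ -L_{22}^T L_{11} & 0 \end{pmatrix},
\]
and observing that this equals $J$ exactly under the two stated conditions; this alternative is useful as an independent check but the characterization-based route is shorter to write up.
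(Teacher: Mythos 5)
Your proof is correct: the paper states Lemma \ref{lemma1} without proof, and your argument — specializing the block characterization of symplecticity given just before the lemma to the case $A_{12}=0$, with the direct computation of $L^TJL$ as a cross-check — is exactly the intended route, and both the reduction and the matrix product are computed correctly. Nothing to add.
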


\medskip

\begin{lemma}\label{lemma2}
A matrix $Q \in \mathbb R^{2n \times 2n}$  is orthogonal  symplectic (i.e., $Q$ is both symplectic and orthogonal)  if and only if $Q$ has a form 
\begin{equation}\label{CS}
Q=\left(
\begin{array}{cc}
 C  &  S \\
 -S &   C
\end{array}
\right),
\end{equation}
where $C, S \in \mathbb R^{n \times n}$ and $U=C+i S$ is unitary. 
\end{lemma}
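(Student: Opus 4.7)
The plan is to pivot on the observation that, for an orthogonal matrix $Q$, the symplectic condition $Q^T J Q = J$ is equivalent to the commutation relation $JQ = QJ$ (simply multiply $Q^TJQ=J$ on the left by $Q$ and use $QQ^T=I$). Thus the lemma reduces to showing that a real $2n\times 2n$ matrix which commutes with $J$ has exactly the block form in \eqref{CS}, and that among such matrices the orthogonal ones correspond bijectively to unitary $U=C+iS$.

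First I would handle the ``only if'' direction. Assuming $Q$ is orthogonal and symplectic, I derive $JQ=QJ$ as above, and write $Q$ in $n\times n$ blocks. Computing both sides of $JQ=QJ$ and equating entries forces the diagonal blocks of $Q$ to coincide (call them $C$) and the off-diagonal blocks to be negatives of each other (call the upper one $S$); this yields the form \eqref{CS}. I then compute
\[
Q^T Q = \begin{pmatrix} C^TC + S^TS & C^TS - S^TC \\ S^TC - C^TS & S^TS + C^TC \end{pmatrix},
\]
so $Q^TQ=I_{2n}$ is equivalent to $C^TC+S^TS=I_n$ and $C^TS=S^TC$. These two identities are exactly the real and imaginary parts of $U^*U=(C^T-iS^T)(C+iS)=I_n$, so $U$ is unitary.

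For the ``if'' direction I would take $Q$ of the form \eqref{CS} with $U=C+iS$ unitary, read the two real identities above off $U^*U=I_n$, and substitute them back into the block computation of $Q^TQ$ to get $Q^TQ=I_{2n}$. Orthogonality together with the block structure yields $JQ=QJ$ by direct computation (or by inspection, since the form \eqref{CS} is characterised by commutation with $J$), and then $Q^TJQ=Q^TQJ=J$, giving symplecticity.

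No step looks like a genuine obstacle; the only place to be careful is the bookkeeping in identifying the blocks of $JQ$ and $QJ$ with the sign convention of \eqref{J}, and matching the identities $C^TC+S^TS=I$, $C^TS=S^TC$ with the real/imaginary decomposition of $U^*U=I$. A cleaner but optional alternative would be to invoke the standard identification of $\mathbb R^{2n}$ with $\mathbb C^n$ under which $J$ represents multiplication by $i$: then $\mathbb R$-linear maps commuting with $J$ are precisely the $\mathbb C$-linear maps, and orthogonal such maps are the unitary ones. I would keep the elementary block-matrix proof as the main argument for concreteness.
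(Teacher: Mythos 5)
Your proof is correct. Note that the paper states Lemma \ref{lemma2} without any proof (it is a standard fact, essentially the identification $Sp(2n,\mathbb R)\cap O(2n)\cong U(n)$), so there is no argument in the paper to compare against. Your reduction of the symplectic condition to $JQ=QJ$ for orthogonal $Q$, the block computation forcing the form \eqref{CS}, and the matching of $Q^TQ=I_{2n}$ with the real and imaginary parts of $U^*U=I_n$ are all accurate and constitute a complete proof of both directions.
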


\medskip

\begin{lemma}\label{lemma3}
Every  symmetric positive definite symplectic matrix  $A \in \mathbb R^{2n \times 2n}$ has a spectral decomposition
$A=Q \text{diag}(D,D^{-1}) Q^T$, where $Q \in \mathbb R^{2n \times 2n}$ is orthogonal  symplectic,  and 
$D=\text{diag}(d_i)$, with $d_1 \ge d_2  \ge \ldots \ge d_n \ge 1$.
\end{lemma}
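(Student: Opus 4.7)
The plan is to construct an orthogonal symplectic $Q$ that diagonalizes $A$, exploiting the fact that $J$ interchanges eigenspaces corresponding to reciprocal eigenvalues. First I would combine $A = A^T$ with $A^T J A = J$ to get $A J A = J$, hence $AJ = J A^{-1}$. Positive definiteness forces every eigenvalue to be positive, and for any eigenpair $Av = \lambda v$ the identity gives $A(Jv) = J A^{-1} v = \lambda^{-1} J v$, so the eigenvalues of $A$ come in reciprocal pairs $\lambda, \lambda^{-1}$. Moreover $v^T J v = -v^T J v$ by skew-symmetry of $J$, hence $v \perp J v$, and $\|J v\| = \|v\|$ since $J$ is orthogonal.

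Next I would build an orthonormal eigenbasis of the form $v_1,\ldots,v_n,-Jv_1,\ldots,-Jv_n$ with $A v_i = d_i v_i$ and $d_1 \ge \cdots \ge d_n \ge 1$. For each eigenvalue $\lambda > 1$, I take any orthonormal basis of the eigenspace $E_\lambda$; applying $J$ to this basis yields an orthonormal basis of the reciprocal eigenspace $E_{1/\lambda}$, which is orthogonal to $E_\lambda$ by the spectral theorem. The delicate case is the fixed eigenspace $E_1$, which is $J$-invariant: $J$ restricted to $E_1$ is an orthogonal skew-symmetric operator satisfying $J^2 = -I$, which forces $\dim E_1$ to be even, say $2m$. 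I would then build a suitable basis inductively: pick any unit $v \in E_1$, observe that $\{v, Jv\}$ is orthonormal, and restrict to the orthogonal complement of $\mathrm{span}(v, Jv)$ inside $E_1$; this complement is again $J$-invariant because $J$ is orthogonal with $J^2 = -I$. Iterating yields $v_1, \ldots, v_m$ such that $v_1, \ldots, v_m, Jv_1, \ldots, Jv_m$ is an orthonormal basis of $E_1$.

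Finally I would assemble $Q$ with columns in the prescribed order and set $D = \text{diag}(d_1,\ldots,d_n)$. The construction gives $Q^T A Q = \text{diag}(D, D^{-1})$. Writing each $v_i = (c_i^T, -s_i^T)^T$ with $c_i, s_i \in \mathbb{R}^n$, a direct computation gives $-J v_i = (s_i^T, c_i^T)^T$, so
\[
Q = \left(\begin{array}{cc} C & S \\ -S & C \end{array}\right),
\]
with $C = [c_1, \ldots, c_n]$ and $S = [s_1, \ldots, s_n]$. Expanding $Q^T Q = I$ block by block yields $C^T C + S^T S = I$ and $C^T S = S^T C$, which together are precisely $(C + iS)^*(C + iS) = I$; hence $C + iS$ is unitary, and Lemma~\ref{lemma2} concludes that $Q$ is orthogonal symplectic. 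The main obstacle is the handling of $E_1$: for $\lambda > 1$, the reciprocal eigenspaces $E_\lambda$ and $E_{1/\lambda}$ are automatically orthogonal by the spectral theorem, but inside $E_1$ one must use the geometry of $J$ itself to separate each basis vector from its $J$-partner.
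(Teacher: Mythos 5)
Your proof is correct. Note that the paper itself states Lemma \ref{lemma3} without proof (it is used as a known structural fact, in the spirit of \cite{Dopico}), so there is no in-paper argument to compare against; your write-up is a legitimate self-contained derivation. The key steps all check out: symmetry plus $A^TJA=J$ gives $AJ=JA^{-1}$, hence $J$ maps $E_\lambda$ isometrically onto $E_{1/\lambda}$ (injectivity both ways forces equal dimensions, so the image of an orthonormal basis is an orthonormal basis); skew-symmetry gives $v\perp Jv$; and your treatment of the only delicate point, the $J$-invariant eigenspace $E_1$, is sound --- the restriction of $J$ is a complex structure, so $\dim E_1$ is even and the inductive peeling of $\mathrm{span}(v,Jv)$ works because the orthogonal complement inside $E_1$ is again $J$-invariant. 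The final block computation identifying $Q$ with the form $\left(\begin{smallmatrix} C & S\\ -S & C\end{smallmatrix}\right)$ and verifying that $Q^TQ=I$ is equivalent to unitarity of $C+iS$ correctly invokes Lemma \ref{lemma2} to conclude symplecticity. Two cosmetic points only: you construct the $E_1$ basis as $v_1,\dots,v_m,Jv_1,\dots,Jv_m$ but assemble the global basis with $-Jv_i$; the sign flip is harmless but worth stating. And the ordering $d_1\ge\cdots\ge d_n\ge 1$ should be mentioned as a final relabeling of the $v_i$ by decreasing eigenvalue, which is immediate since every eigenvalue below $1$ has been accounted for as a reciprocal.
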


\medskip

In order to create  examples of symmetric positive definite symplectic matrices we can use the following result  from \cite{Dopico}, Theorem 5.2.

\begin{lemma}\label{lemma4}
Every  symmetric positive definite symplectic matrix  $A \in \mathbb R^{2n \times 2n}$
can be written as
\begin{equation}\label{Charlie}
A=\left(
\begin{array}{cc}
 I_n  &  0 \\
 C &   I_n
\end{array}
\right)
\quad
\left(
\begin{array}{cc}
 G  &  0 \\
 0 &   G^{-1}
\end{array}
\right)
\quad
\left(
\begin{array}{cc}
 I_n  &  C\\
 0 &   I_n
\end{array}
\right),
\end{equation}
where $G$ is symmetric positive definite and $C$ is symmetric.
\end{lemma}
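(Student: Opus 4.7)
The plan is to show that the factorization follows directly from the block LDL-type decomposition of $A$ in which the Schur complement collapses thanks to the symplecticity conditions. Specifically, I will take $G := A_{11}$ and $C := A_{11}^{-1} A_{12}$, and then verify that the claimed factored form reproduces $A$ block by block.

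First I would unpack the consequence of Definition \ref{def1} stated after (\ref{blockA}), together with $A = A^T$, to get three working identities: $A_{11}^T A_{21}$ symmetric (which, since $A_{11}^T = A_{11}$ and $A_{21}^T = A_{12}$, becomes $A_{11} A_{21} = A_{12} A_{11}$); $A_{12}^T A_{22}$ symmetric; and $A_{11} A_{22} - A_{12} A_{12} = I$. Next I would observe that $A_{11}$, being a principal submatrix of a symmetric positive definite matrix, is itself symmetric positive definite, hence invertible, so $G := A_{11}$ is a legitimate candidate and $G^{-1}$ exists.

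Then I would define $C := A_{11}^{-1} A_{12}$ and check symmetry of $C$: $C^T = A_{12}^T A_{11}^{-1} = A_{21} A_{11}^{-1}$, so $C = C^T$ is equivalent to $A_{11}^{-1} A_{12} = A_{21} A_{11}^{-1}$, i.e., $A_{12} A_{11} = A_{11} A_{21}$, which is exactly the first identity above. With $G$ symmetric positive definite and $C$ symmetric in hand, I would expand the right-hand side of (\ref{Charlie}) as
\[
\begin{pmatrix} G & GC \\ CG & CGC + G^{-1} \end{pmatrix}
\]
and compare blockwise. The $(1,1)$, $(1,2)$, and $(2,1)$ blocks match $A_{11}, A_{12}, A_{21}$ by construction (using $C^T = C$ for the $(2,1)$ block).

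The only nontrivial step, and the one I expect to be the main obstacle in writing, is verifying $A_{22} = CGC + G^{-1}$. I would compute $CGC + G^{-1} = A_{11}^{-1} A_{12} A_{11} A_{11}^{-1} A_{12} + A_{11}^{-1} = A_{11}^{-1}(A_{12} A_{12} + I)$ and then invoke the symplectic identity $A_{11} A_{22} - A_{12} A_{12} = I$ to rewrite $A_{12} A_{12} + I = A_{11} A_{22}$, yielding $CGC + G^{-1} = A_{11}^{-1} A_{11} A_{22} = A_{22}$, which finishes the verification. The identity $A_{12}^T A_{22} = A_{22}^T A_{12}$ is then automatic and need not be invoked separately.
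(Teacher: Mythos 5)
Your argument is correct. Note, however, that the paper itself gives no proof of this lemma: it is imported verbatim from Dopico--Johnson (Theorem 5.2 of \cite{Dopico}), so there is nothing to match your proof against line by line. What you have written is a legitimate, self-contained verification, and it is entirely consistent with the machinery the paper deploys elsewhere: your three working identities are exactly the conditions (\ref{cond1})--(\ref{cond3}) that the authors derive from $AJA=J$ in the proof of Lemma \ref{lemma5}, and your computation of the $(2,2)$ block, $CGC+G^{-1}=A_{11}^{-1}(A_{12}A_{12}+I)=A_{22}$, is essentially a re-derivation of that lemma's conclusion $S=A_{22}-A_{12}^TA_{11}^{-1}A_{12}=A_{11}^{-1}$ (equivalently, the collapse of the Schur complement in the block $LDL^T$ factorization, which is precisely what makes the middle factor $\operatorname{diag}(G,G^{-1})$ rather than $\operatorname{diag}(G,S)$). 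All the individual steps check out: $G=A_{11}$ is symmetric positive definite as a principal submatrix of $A$; symmetry of $C=A_{11}^{-1}A_{12}$ is equivalent to (\ref{cond2}); the $(2,1)$ block $CG$ equals $(GC)^T=A_{12}^T=A_{21}$ by symmetry of $C$ and $G$; and the $(2,2)$ block follows from (\ref{cond1}). The only cosmetic remark is that you could have shortened the write-up by citing Lemma \ref{lemma5} directly for the last block instead of re-deriving it, but as a blind proof your route is the natural one and buys a proof of the lemma that the paper only asserts.
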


\medskip

\begin{lemma}\label{lemma5}
Let  $A \in \mathbb R^{2n \times 2n}$  be a symmetric positive definite symplectic matrix, partitioned as in (\ref{blockA}). 
Let $S$ be the Schur complement of $A_{11}$ in $A$:
\begin{equation}\label{S1}
S=A_{22}-A_{12}^TA_{11}^{-1}A_{12}.
\end{equation}

Then $S$ is symmetric positive definite and we have
\begin{equation}\label{S2}
S=A_{11}^{-1}.
\end{equation}
\end{lemma}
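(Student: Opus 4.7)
The plan is to prove the two assertions in succession. Symmetric positive definiteness of $S$ is essentially free: because $A$ is symmetric positive definite and $A_{11}$ is a leading principal submatrix, $A_{11}$ is itself symmetric positive definite, hence invertible, and the Schur complement $S=A_{22}-A_{12}^TA_{11}^{-1}A_{12}$ of a symmetric positive definite matrix with respect to a symmetric positive definite block is symmetric positive definite. This follows from the block $LDL^T$-type factorization
\begin{equation*}
A=\begin{pmatrix} I & 0 \\ A_{12}^T A_{11}^{-1} & I\end{pmatrix}
\begin{pmatrix} A_{11} & 0 \\ 0 & S\end{pmatrix}
\begin{pmatrix} I & A_{11}^{-1} A_{12} \\ 0 & I\end{pmatrix},
\end{equation*}
which uses that $A$ is symmetric (so $A_{21}=A_{12}^T$) and which immediately forces $S$ to inherit symmetric positive definiteness.

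For the identity $S=A_{11}^{-1}$ I would translate the symplectic condition into relations among the blocks. Applying the characterization quoted just after Definition~\ref{def1}, together with the symmetry of $A$ (so $A_{21}=A_{12}^{T}$, $A_{11}^{T}=A_{11}$, $A_{22}^{T}=A_{22}$), gives two key consequences: the symmetry of $A_{11}^{T}A_{21}=A_{11}A_{12}^{T}$ yields
\begin{equation*}
A_{11}A_{12}^{T}=A_{12}A_{11},
\end{equation*}
and the determinant-like identity $A_{11}^{T}A_{22}-A_{21}^{T}A_{12}=I$ becomes
\begin{equation*}
A_{11}A_{22}-A_{12}A_{12}=I.
\end{equation*}

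With these in hand, the conclusion is a short manipulation. Multiplying $S$ on the left by $A_{11}$ and using $A_{11}A_{12}^{T}A_{11}^{-1}=A_{12}$ (from the first relation) produces
\begin{equation*}
A_{11}S=A_{11}A_{22}-A_{11}A_{12}^{T}A_{11}^{-1}A_{12}=A_{11}A_{22}-A_{12}A_{12}=I,
\end{equation*}
so $S=A_{11}^{-1}$. The only genuine obstacle is unpacking the symplectic block identities for a \emph{symmetric} $A$; once they are written down, the rest is a one-line computation, and the positive definiteness of $S$ is immediate from standard Schur complement theory.
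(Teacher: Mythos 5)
Your proof is correct and follows essentially the same route as the paper's: both reduce the symplectic condition for a symmetric $A$ to the block identities $A_{11}A_{12}^{T}=A_{12}A_{11}$ and $A_{11}A_{22}-A_{12}A_{12}=I$, and then obtain $S=A_{11}^{-1}$ by a one-line manipulation (you left-multiply $S$ by $A_{11}$, the paper instead solves for $A_{22}$ --- a cosmetic difference). Your explicit block factorization argument for the positive definiteness of $S$ simply fills in what the paper cites as well known.
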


\begin{proof}
The property (\ref{S2}) was proved in a more general setting in \cite{Dopico}, see Corollary 2.3.
We propose an alternative proof for completeness.

It is well known that if $A$ is a symmetric positive definite matrix then the Schur complement $S$ is also symmetric positive definite.
We only need to prove (\ref{S2}).  Let $A$ be  a symmetric positive definite matrix. Then $A$ is  symplectic if and only if $AJA=J$, 
which is equivalent to the three following 
conditions:
\begin{equation}\label{cond1}
A_{11} A_{22}-A_{12}^2=I,
\end{equation}
\begin{equation}\label{cond2}
A_{11} A_{12}^T=A_{12} A_{11},
\end{equation}
\begin{equation}\label{cond3}
A_{12}^T A_{22}=A_{22} A_{12}.
\end{equation}

From (\ref{cond1}) we get $A_{22}=A_{11}^{-1}+ (A_{11}^{-1} A_{12}) A_{12}$. We can rewrite (\ref{cond2}) as 
$A_{12}^T A_{11}^{-1}=A_{11}^{-1}  A_{12}$. Thus, we have
$A_{22}=A_{11}^{-1}+ A_{12}^TA_{11}^{-1}A_{12}$, which together with (\ref{S1}) leads to  (\ref{S2}). 
\end{proof}

\medskip

We propose methods for computing  symplectic $LL^T$ factorization of a given  symmetric positive definite symplectic matrix $A$, 
where $L$ is symplectic and  partitioned as in  (\ref{L}).  We apply  the Cholesky and the Reverse Cholesky decompositions. 
Practical algorithm for the Reverse Cholesky decomposition is described in Section $2$, see Remark \ref{remark2}.  

\begin{theorem}\label{thm1}
Let $M \in \mathbb R^{m \times m}$ be a  symmetric positive definite matrix. 
\begin{description}
\item[(i)] Then there exists a unique lower triangular matrix $L \in  \mathbb R^{m \times m}$ with positive diagonal entries  such that 
$M=L L^T$ (Cholesky decomposition).

\item[(ii)] Then there exists a unique upper triangular matrix 
$U \in  \mathbb R^{m \times m}$ with positive diagonal entries  such that $M=U U^T$ (Reverse Cholesky decomposition).
\end{description}
\end{theorem}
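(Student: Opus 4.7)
The plan is to prove part (i) first by induction on the size $m$, and then derive part (ii) from (i) by conjugating with the reversal permutation.

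For part (i), the base case $m=1$ is immediate: since $M=[a]$ with $a>0$, take $L=[\sqrt{a}]$. For the inductive step, I would partition
\[
M=\left(\begin{array}{cc} a & b^T \\ b & M' \end{array}\right),
\]
where $a>0$ because $M$ is positive definite. Setting $\ell_{11}=\sqrt{a}$ and $\ell=b/\sqrt{a}$, a direct block multiplication shows that writing $L=\left(\begin{smallmatrix}\ell_{11} & 0\\ \ell & L'\end{smallmatrix}\right)$ gives $M=LL^T$ precisely when $L'(L')^T$ equals the Schur complement $S=M'-bb^T/a$. Standard results give that $S$ is symmetric positive definite of size $m-1$, so the inductive hypothesis produces a unique lower triangular $L'$ with positive diagonal, and the construction terminates. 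For uniqueness I would argue that if $L_1L_1^T=L_2L_2^T$ with both factors lower triangular and of positive diagonal, then $L_2^{-1}L_1=L_2^TL_1^{-T}$; the left side is lower triangular while the right side is upper triangular, so the common value is diagonal, say $D$. Taking the relation $L_1=L_2 D$ and substituting back gives $D^2=I$, and positivity of the diagonals forces $D=I$.

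For part (ii), I would reduce it to (i) using the reversal permutation $P\in\mathbb R^{m\times m}$ defined by $P_{ij}=1$ iff $i+j=m+1$. This $P$ satisfies $P=P^T=P^{-1}$, and $\widetilde M=PMP$ is again symmetric positive definite. By (i), $\widetilde M=\widetilde L\widetilde L^T$ with $\widetilde L$ lower triangular and positive diagonal. Setting $U=P\widetilde L P$, one checks directly that $U$ is upper triangular (row/column reversal maps lower triangular to upper triangular) with diagonal entries $U_{ii}=\widetilde L_{m+1-i,\,m+1-i}>0$, and
\[
UU^T=P\widetilde L P\,P\widetilde L^T P=P\widetilde M P=M.
\]
Uniqueness of $U$ then follows from uniqueness in (i): any other such factorization $M=U'(U')^T$ yields $\widetilde M=(PU'P)(PU'P)^T$ with $PU'P$ lower triangular of positive diagonal, so $PU'P=\widetilde L$ and hence $U'=U$.

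There is no real obstacle here; the argument is standard and the only point requiring care is the uniqueness step in (i), where one must invoke the fact that a matrix that is simultaneously upper and lower triangular is diagonal, combined with the positivity hypothesis to eliminate sign ambiguity. Part (ii) is then essentially bookkeeping once the correct permutation is chosen.
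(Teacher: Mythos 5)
Your argument is correct, but it takes a genuinely different route from the paper's. The paper treats part (i) as known and proves only (ii), doing so by applying (i) to the inverse: writing $M^{-1}=\hat L\hat L^{T}$ with $\hat L$ lower triangular of positive diagonal, it sets $U=\hat L^{-T}$, which is upper triangular with positive diagonal and satisfies $UU^{T}=(\hat L\hat L^{T})^{-1}=M$, with uniqueness inherited because $U\mapsto U^{-T}$ is a bijection between upper triangular factorizations of $M$ and lower triangular factorizations of $M^{-1}$. You instead prove (i) in full by induction on the Schur complement (with a clean uniqueness argument via the simultaneously lower and upper triangular matrix $D$ and $D^{2}=I$), and you obtain (ii) by conjugating with the reversal permutation $P$. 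Both routes are valid. The paper's inverse-based argument is shorter and directly exhibits the identity $U=\hat L^{-T}$ that is exploited in Theorem \ref{thm2}, where $L_{22}=L_{11}^{-T}=U$ precisely because the Schur complement equals $A_{11}^{-1}$; your permutation argument avoids any matrix inversion, is constructive, and is in fact the reduction the paper itself uses for the practical computation of the Reverse Cholesky factor in Remark \ref{remark2} (the \texttt{reverse\_chol} code), so it aligns better with the implemented algorithm even though it is not the proof given for the theorem.
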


\begin{proof} We only need to prove {\bf (ii)}. Using the fact {\bf (i)} for the inverse of $M$, we get $M^{-1}=\hat L {\hat L}^T$, where $\hat L$ is 
a lower triangular matrix with positive diagonal entries. Then $M=(\hat L {\hat L}^T)^{-1}=U U^T$ where $U={\hat L}^{-T}$. Clearly, $U$ is upper triangular with positive entries, and 
$U$ is unique.
\end{proof}

\medskip

Based on Theorem \ref{thm1}, we prove the following result on symplectic $L L^T$ factorization (see  \cite{Dopico}, Theorem 5.2).

\begin{theorem}\label{thm2}
Let  $A \in \mathbb R^{2n \times 2n}$  be a symmetric positive definite symplectic matrix of the form
\begin{equation}\label{form A}
A=\left(
\begin{array}{cc}
 A_{11}  &  A_{12} \\
 A_{12}^T &   A_{22} 
\end{array}
\right).
\end{equation}

If $A_{11}=L_{11} L_{11}^T$ is the Cholesky decomposition of $A_{11}$, then $A=L L^T$, in which 
\begin{equation}\label{eqs L}
L=\left(
\begin{array}{cc}
 L_{11} &   0 \\
 L_{21} &  L_{22}
\end{array}
\right)=
\left(
\begin{array}{cc}
 L_{11} &   0 \\
 (L_{11}^{-1} A_{12})^T &  L_{11}^{-T}
\end{array}
\right)
\end{equation}
is symplectic.

If  $S$ is the Schur complement of $A_{11}$ in $A$, defined in (\ref{S1}, and  $S=U U^T$ is the Reverse Cholesky decomposition of $S$, 
then $L_{22}= L_{11}^{-T} = U$.
\end{theorem}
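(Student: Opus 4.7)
The plan is to verify $A = LL^T$ blockwise, then check symplecticity via Lemma \ref{lemma1}, and finally identify $L_{22}$ with $U$ via uniqueness. First I would expand
\[
LL^T = \begin{pmatrix} L_{11} L_{11}^T & L_{11} L_{21}^T \\ L_{21} L_{11}^T & L_{21} L_{21}^T + L_{22} L_{22}^T \end{pmatrix}
\]
and match each block against the corresponding block of $A$. The $(1,1)$-block matches by the Cholesky assumption on $A_{11}$. Since $L_{21}^T = L_{11}^{-1} A_{12}$ and $L_{22} = L_{11}^{-T}$, the $(1,2)$- and $(2,1)$-blocks collapse to $A_{12}$ and $A_{12}^T$ after cancellation. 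The $(2,2)$-block evaluates to $A_{12}^T A_{11}^{-1} A_{12} + A_{11}^{-1}$, and this equals $A_{22}$ by Lemma \ref{lemma5}, which gives $A_{22} - A_{12}^T A_{11}^{-1} A_{12} = S = A_{11}^{-1}$.

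For symplecticity, Lemma \ref{lemma1} requires $L_{22} = L_{11}^{-T}$, which holds by construction, and $L_{21}^T L_{11} = L_{11}^T L_{21}$. Substituting the explicit formula for $L_{21}$, this reduces to the claim that $L_{11}^{-1} A_{12} L_{11}$ is symmetric. This is the one step in the proof that is not pure substitution and I expect it to be the main (if minor) obstacle. I would invoke the symplectic identity $A_{11} A_{12}^T = A_{12} A_{11}$ from (\ref{cond2}); combining with $A_{11} = L_{11} L_{11}^T$ and left/right multiplication by $L_{11}^{-1}$ and $L_{11}^{-T}$ rewrites $L_{11}^{-1} A_{12} L_{11}$ as $L_{11}^T A_{12}^T L_{11}^{-T}$, which is manifestly its own transpose.

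Finally, for the identification $L_{22} = U$, I would compute $L_{22} L_{22}^T = L_{11}^{-T} L_{11}^{-1} = (L_{11} L_{11}^T)^{-1} = A_{11}^{-1}$, which equals $S$ by Lemma \ref{lemma5}. Since $L_{11}$ is lower triangular with positive diagonal entries, $L_{22} = L_{11}^{-T}$ is upper triangular with positive diagonal entries, so the uniqueness assertion in Theorem \ref{thm1}(ii) forces $L_{22} = U$.
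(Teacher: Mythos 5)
Your proof is correct and follows essentially the same route as the paper: blockwise verification of $A=LL^T$, Lemma \ref{lemma5} for the $(2,2)$-block and for $S=A_{11}^{-1}$, Lemma \ref{lemma1} together with (\ref{cond2}) for symplecticity, and the uniqueness in Theorem \ref{thm1}(ii) to identify $L_{22}$ with $U$. The only difference is that you spell out the symmetry of $L_{11}^{-1}A_{12}L_{11}$ explicitly, a step the paper leaves to the reader.
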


\begin{proof} We can write
\[
L L^T=
\left(
\begin{array}{cc}
L_{11}  L_{11}^T &  L_{11}  L_{21}^T  \\
 ( L_{11}  L_{21}^T)^T & L_{21} L_{21}^T +  L_{22} L_{22}^T
\end{array}
\right).
\]
This gives   the identities
\[
A_{11}= L_{11}  L_{11}^T, \quad A_{12}= L_{11}  L_{21}^T, \quad A_{22}=L_{21} L_{21}^T+L_{22} L_{22}^T.
\]

Clearly, $L_{21}^T=L_{11}^{-1} A_{12}$, and $S=A_{22}-L_{21} L_{21}^T$ is the Schur complement of $A_{11}$ in $A$.
Moreover, $S=L_{22} L_{22}^T$. If  $S=U U^T$ is the Reverse Cholesky decomposition of $S$ and $L_{22}$ is upper triangular, then $L_{22}=U$, by Theorem \ref{thm1}.
From  Lemma \ref{lemma5} we have  $S=A_{11}^{-1}$, hence  $S= L_{11}^{-T} L_{11}^{-1}$. 
Notice that $L_{11}^{-T}$ is upper triangular, so $U=L_{11}^{-T}$. 
 
It is easy to prove that $L$ in \ref{eqs L} is symplectic. It follows  from Lemma  \ref{lemma1} and (\ref{cond2}).
\end{proof}

\medskip

The paper is organized as follows.  Section $2$ describes  Algorithms $W_1$ and $W_2$. 
Section $3$ present both theoretical and practical computational issues.  Section $4$ is devoted to numerical experiments and comparisons of the methods.
Conclusions are given in Section $5$.

\medskip

\section {Algorithms}\label{algorithms}

We apply Theorem \ref{thm2} to develop two algorithms for finding the symplectic $LL^T$ factorization. They differ only in a way of computing the matrix $L_{22}$.
Algorithm $W_1$ is based on Theorem 5.2 from \cite{Dopico}. We propose Algorithm $W_2$, which can be used for symmetric positive definite matrix $A$, 
not necessarily symplectic. However, if $A$ is additionally symplectic then the factor $L$ is also symplectic as well.

\begin{itemize}
\item[] {\bf Algorithm $W_1$}

Given a symmetric positive definite symplectic matrix $A \in   \mathbb R^{2n \times 2n}$. This algorithm computes the symplectic  $LL^T$ factorization $A=LL^T$, where
$L$ is symplectic and has a form 
\[
L=\left(
\begin{array}{cc}
 L_{11} &   0 \\
 L_{21} &  L_{22}
\end{array}
\right).
\]
\begin{itemize}
\item Find the Cholesky decomposition  $A_{11}=L_{11}L_{11}^T$.
\item Solve the multiple  lower triangular system   $L_{11}  L_{21}^T=A_{12}$ by forward  substitution.
\item Solve the lower triangular system  $L_{11} X=I$ by forward substitution, i.e. computing  each column of $X=L_{11}^{-1}$  independly,  using forward substitution.     
\item Take $L_{22}=X^{T}$. 
\end{itemize}
\noindent
{\bf Cost:} $\frac{5}{3} n^3$ flops.

\item[] {\bf Algorithm $W_2$}

Given a symmetric positive definite symplectic matrix $A \in  \mathbb R^{2n \times 2n}$. This algorithm computes the symplectic $LL^T$ factorization $A=LL^T$, where
$L$ is symplectic and has a form 
\[
L=\left(
\begin{array}{cc}
 L_{11} &   0 \\
 L_{21} &  L_{22}
\end{array}
\right).
\]
\begin{itemize}
\item Find the Cholesky factorization $A_{11}=L_{11}L_{11}^T$.
\item Solve the multiple  lower triangular system   $L_{11}  L_{21}^T=A_{12}$ by forward  substitution.
\item Compute the Schur complement $S=A_{22}- L_{21} L_{21}^T$. 
\item Find the Reverse Cholesky decomposition   $S=L_{22}\, L_{22}^T$, where $L_{22}$ is upper triangular matrix with positive diagonal entries. 
\end{itemize}
\end{itemize}
\noindent
{\bf Cost:} $\frac{8}{3} n^3$ flops.

\medskip

\begin{remark} \label{remark2}
The Reverse Cholesky decomposition  $M=UU^T$ of a  symmetric  positive definite matrix $M \in   \mathbb R^{m \times m}$ can be treated as the  Cholesky decomposition of the matrix
$M_{new}= P^T M P$, where $P$ is the permutation matrix comprising the identity matrix with its column in reverse order. If $M_{new}=L L^T$, where $L$ is lower triangular 
(with positive diagonal entries), then $M=UU^T$, with $U=P L P^T$ being  upper triangular (with positive diagonal entries). 

For example, for $m=3$ we have
\[
P=\left(
\begin{array}{ccc}
 0  &  0 & 1  \\
 0  &  1 & 0  \\ 
 1  &  0 & 0  
\end{array}
\right), 
\quad P^T M P=
\left(
\begin{array}{ccc}
 m_{33} &  m_{32} & m_{31}  \\
 m_{23} &  m_{22} & m_{21}  \\
m_{13} &  m_{12} & m_{11}
\end{array}
\right),
\]
and
\[
L=\left(
\begin{array}{ccc}
  l_{11} &  0 & 0  \\
 l_{21} &  l_{22} & 0  \\
l_{31} &  l_{32} & l_{33}
\end{array}
\right),
\quad U=\left(
\begin{array}{ccc}
  l_{33} &  l_{32} & l_{31}  \\
 0 &  l_{22} & l_{21}  \\
0 &  0 & l_{11}
\end{array}
\right).
\]

\medskip

We use the following \textsl{MATLAB} code:
 
\begin{verbatim}
function U = reverse_chol(M)
% U = reverse_chol(M)
% The Reverse Cholesky decomposition M=U U',
% where U is upper triangular with positive diagonal entries.
% Here M(mxm) is a symmetric positive definite matrix.
% 
m=max(size(M)); U=zeros(m); 
p=m:-1:1; 
M_new=M(p,p);
L=chol(M_new,'lower'); % Cholesky decomposition
U=L(p,p);
end
\end{verbatim}
\end{remark}

\medskip

\section{Theoretical and practical computational issues}

In this work,  for any matrix $X \in \mathbb R^{m \times m}$,   $\normtwo{X}$ denotes  the 2-norm (the spectral norm) of $A$, 
and $\kappa_2(X)=\normtwo{X^{-1}} \cdot \normtwo{X}$ is the condition number of  a nonsingular matrix $X$.

This section mainly addresses  the problem of measuring the departure of a given matrix from symplecticity. We also touch a few aspects of 
numerical stability of Algorithms $W_1$ and $W_2$. However, this topic exceeds the scope of this paper. 

First we introduce  the loss of symplecticity (absolute error)  of $X \in \mathbb R^{2n \times 2n}$ as
\begin{equation}\label{DeltaX}
\Delta (X) = \normtwo{X^TJX-J}.
\end{equation}

Clearly,  $\Delta (X)=0$ if and only if $X$ is symplectic. 
If $X \in \mathbb R^{2n \times 2n}$  is symplectic then $X^{-1}=J^T X^T J$, 
and the condition number of $X$ equals  $\kappa_2(X)=\normtwo{X}^2$.
However,  in practice  $\Delta (X)$ hardly ever  equals  $0$. 

\medskip

\begin{lemma}\label{lemma6}
Let $X \in \mathbb R^{2n \times 2n}$ satisfy  $\Delta (X)<1$. 
Then $X$ is nonsingular and we have 
\begin{equation}\label{kappaX}
\kappa_2 (X) \leq \frac{\normtwo{X}^2}{1-\Delta (X)}.
\end{equation}
\end{lemma}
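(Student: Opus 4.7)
The plan is to convert the symplecticity defect $X^{T}JX-J$ into a statement about a perturbation of the identity, from which a Neumann-series (perturbation) bound yields both invertibility and the claimed norm estimate.

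First I would write $X^{T}JX = J + E$ where $E := X^{T}JX - J$, so that $\normtwo{E} = \Delta(X) < 1$. Since $J$ is orthogonal ($J^{T}J = I$ and $\normtwo{J} = \normtwo{J^{T}} = 1$), multiplying on the left by $J^{T}$ gives
\begin{equation*}
J^{T}X^{T}JX = I + J^{T}E,
\end{equation*}
with $\normtwo{J^{T}E} \le \normtwo{E} = \Delta(X) < 1$. By the standard Neumann-series argument, $I + J^{T}E$ is nonsingular and
\begin{equation*}
\normtwo{(I+J^{T}E)^{-1}} \le \frac{1}{1 - \Delta(X)}.
\end{equation*}
In particular the product $J^{T}X^{T}JX$ is nonsingular, which forces $X$ itself to be nonsingular.

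Next I would solve for $X^{-1}$ from $(I+J^{T}E)^{-1}\,J^{T}X^{T}J \cdot X = I$, giving
\begin{equation*}
X^{-1} = (I+J^{T}E)^{-1}\, J^{T} X^{T} J.
\end{equation*}
Taking 2-norms, using submultiplicativity, $\normtwo{J} = \normtwo{J^{T}} = 1$, and $\normtwo{X^{T}} = \normtwo{X}$, yields
\begin{equation*}
\normtwo{X^{-1}} \le \frac{\normtwo{X}}{1-\Delta(X)},
\end{equation*}
and multiplying by $\normtwo{X}$ produces the stated bound on $\kappa_2(X)$.

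There is no real obstacle here; the only point that requires any care is correctly exploiting the orthogonality of $J$ to make sure no spurious factors appear in the perturbation estimate, and to ensure that the hypothesis $\Delta(X) < 1$ is exactly what is needed for the Neumann series to converge.
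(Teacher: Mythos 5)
Your proof is correct and follows essentially the same route as the paper: both rewrite $X^{T}JX-J$ as a perturbation of the identity (your $I+J^{T}E$ equals the paper's $I_{2n}-JF$ since $J^{T}=-J$), deduce invertibility from $\Delta(X)<1$, and bound $\normtwo{X^{-1}}$ via $X^{-1}=(I+J^{T}E)^{-1}J^{T}X^{T}J$. The only cosmetic difference is that the paper establishes $\det X\neq 0$ by a determinant computation rather than your product-of-nonsingular-factors argument.
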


\begin{proof}
Assume that $\Delta (X)<1$. We first prove that $\det X \neq 0$.    

Define  $F=X^TJX-J$.  Since $J^T=-J$ and $J^2=-I_{2n}$, we have the identity
\begin{equation}\label{JF}
X^TJX=J (I_{2n}-JF).
\end{equation}

Since $J$ is orthogonal, we get   $\normtwo{JF}=\normtwo{F}=\Delta (X)<1$, hence  the matrix $I_{2n}-JF$ is nonsigular. 
Then (\ref{JF}) and the property $\det J=1$  leads to  $(\det X)^2=\det (X^TJX)=\det (I_{2n}-JF) \neq 0$. Therefore, $\det X \neq 0$.

To estimate  $\kappa_2 (X)$,  we rewrite  (\ref{JF}) as 
\begin{equation}\label{invX}
 X^{-1}= (I_{2n}-JF)^{-1} (J^TX^TJ). 
\end{equation}

Taking norms we obtain 
\[
\normtwo{X^{-1}}\leq \normtwo{(I_{2n}-JF)^{-1}} \, \normtwo{J^TX^TJ} \leq \frac{\normtwo{X}}{1-\normtwo{JF}}.
\]

This together with  $\normtwo{JF}=\Delta (X)$ establishes the formula  (\ref{kappaX}).
The proof is complete.
\end{proof}

\medskip

Now we show  that  the assumption  $\Delta (X)<1$  of  Lemma \ref{lemma6} is crucial.

\begin{lemma}\label{lemma7}
For every $t \ge 1$ and every natural number $n$ there exists a singular matrix $X \in \mathbb R^{2n \times 2n}$ such that  $\Delta (X)=t$. 
\end{lemma}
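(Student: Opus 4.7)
My plan is to prove the lemma by explicit construction. Since we must hit every target value $t \ge 1$, the example has to be parameterized by $t$, and $X$ should be simple enough that $X^T J X$ is computable by inspection. A natural candidate family is block diagonal, $X = \mathrm{diag}(I_n, E)$ with $E \in \mathbb{R}^{n \times n}$ symmetric and singular; then $X^T J X$ is automatically block antidiagonal with blocks $\pm E$, and singularity of $X$ is inherited from $E$.

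For $n \ge 2$, I would take $E = \mathrm{diag}(t+1,0,\ldots,0)$, so that a direct calculation gives
\[
X^T J X - J = \begin{pmatrix} 0 & B \\ -B & 0 \end{pmatrix}, \qquad B = E - I_n = \mathrm{diag}(t,-1,\ldots,-1).
\]
Computing $M^T M$ for $M = X^T J X - J$ yields a block diagonal matrix with both diagonal blocks equal to $B^T B$, so immediately $\normtwo{M} = \normtwo{B}$. Since $B$ is diagonal with largest absolute entry $t$ (because $t \ge 1$), this gives $\Delta(X) = t$ as required.

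The main obstacles are minor but worth mentioning. One has to verify the norm identity that the spectral norm of the block antidiagonal matrix above equals $\normtwo{B}$, which is exactly what the $M^T M$ computation establishes. The case $n = 1$ is genuinely special: in $\mathbb{R}^{2 \times 2}$ a direct expansion shows $X^T J X = (\det X) J$, so singularity of $X$ forces $\Delta(X) = 1$ identically. Hence for $n = 1$ only $t = 1$ is realized, by taking any singular $2 \times 2$ matrix, and the real content of the lemma lies in the construction for $n \ge 2$. A small refinement of the construction (for example, padding $E$ with a nontrivial antisymmetric perturbation) could also be used if one wanted a matrix of higher rank, but the rank-$n$ version above is the cleanest witness.
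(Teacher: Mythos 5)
Your construction is essentially the same as the paper's: an explicit block-diagonal matrix with a rank-deficient diagonal block, so that $X^TJX-J$ is block antidiagonal with a diagonal block of norm $t$ (the paper uses $X=\mathrm{diag}(D,-D)$ with $D=\sqrt{t-1}\,\mathrm{diag}(1,0,\ldots,0)$, you use $\mathrm{diag}(I_n,E)$ with $E=\mathrm{diag}(t+1,0,\ldots,0)$), and your norm computation via $M^TM$ is correct. Your remark about $n=1$ is a genuine catch that the paper overlooks: since $X^TJX=(\det X)J$ for $2\times 2$ matrices, a singular $X$ forces $\Delta(X)=1$, so the lemma as stated fails for $n=1$ and $t>1$, and indeed the paper's own witness $\mathrm{diag}(\sqrt{t-1},-\sqrt{t-1})$ is not singular in that case.
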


\medskip

\begin{proof}
The proof consists in the construction of such matrix $X$.

Define 
\[
X=\left(
\begin{array}{cc}
 D &   0 \\
 0 &  -D
\end{array}
\right),
\]
where $D=\sqrt {t-1} \, diag(1,0, \ldots,0)$.  Clearly, $\det X=\det D \, \det(-D)=0$.

Then  we have
\[
X^TJX-J=\left(
\begin{array}{cc}
 0 &   -(D^2+I_n) \\
 D^2+I_n &  0
\end{array}
\right).
\]

Therefore, $\Delta (X)=\normtwo{D^2+I_n}= \normtwo{diag(t,1, \ldots, 1)}=t$.
This completes the proof.
\end{proof}

\medskip

\begin{lemma}\label{lemma8} 
Let $A \in \mathbb R^{2n \times 2n}$ be a symplectic matrix.  Suppose that the perturbed matrix  $\hat A=A+E$ satisfies
\begin{equation}\label{normE}
\normtwo{E}\leq \epsilon \normtwo{A}, \quad 0<\epsilon<1.
\end{equation}

Then $\hat A \neq 0$ and 
\begin{equation}\label{boundE}
\Delta (\hat A) \leq \normtwo{\hat A}^2 \, \, (2 \epsilon+{\cal O}(\epsilon^2)).
\end{equation}
\end{lemma}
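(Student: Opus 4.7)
The plan is to expand $\hat A^T J \hat A - J$ using the symplecticity $A^T J A = J$ and then estimate each term. Writing $\hat A = A + E$ gives
\[
\hat A^T J \hat A - J = A^T J A + A^T J E + E^T J A + E^T J E - J = A^T J E + E^T J A + E^T J E,
\]
since $A^T J A = J$. Taking the 2-norm and using $\|J\|_2 = 1$ yields the preliminary bound
\[
\Delta(\hat A) \le 2 \|A\|_2 \|E\|_2 + \|E\|_2^2 \le 2\epsilon \|A\|_2^2 + \epsilon^2 \|A\|_2^2.
\]

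Next I would deal with the nonsingularity/nonzeroness and convert $\|A\|_2$ into $\|\hat A\|_2$. Because $A$ is symplectic, $A$ is invertible, so $\|A\|_2>0$. The perturbation bound gives
\[
\|\hat A\|_2 \ge \|A\|_2 - \|E\|_2 \ge (1-\epsilon)\|A\|_2 > 0,
\]
which handles $\hat A \ne 0$ and also supplies the reverse estimate $\|A\|_2 \le (1-\epsilon)^{-1}\|\hat A\|_2$. Substituting back,
\[
\Delta(\hat A) \le (2\epsilon + \epsilon^2) \|A\|_2^2 \le \frac{2\epsilon + \epsilon^2}{(1-\epsilon)^2} \|\hat A\|_2^2 = \|\hat A\|_2^2 \bigl(2\epsilon + \mathcal O(\epsilon^2)\bigr),
\]
which is exactly (\ref{boundE}).

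There is no real obstacle here; the only subtlety is ensuring the right-hand side is expressed in terms of $\|\hat A\|_2$ rather than $\|A\|_2$, which is what forces us to use $\|A\|_2 \le \|\hat A\|_2/(1-\epsilon)$ and absorb the resulting $(1-\epsilon)^{-2} = 1 + 2\epsilon + \mathcal O(\epsilon^2)$ into the $\mathcal O(\epsilon^2)$ term. Alternatively, one could rewrite $A = \hat A - E$ from the outset, obtaining $\hat A^T J \hat A - J = \hat A^T J E + E^T J \hat A - E^T J E$ and then directly bounding by $2\|\hat A\|_2 \|E\|_2 + \|E\|_2^2$, and using $\|E\|_2 \le \epsilon\|\hat A\|_2/(1-\epsilon)$; this gives the same conclusion perhaps a touch more cleanly.
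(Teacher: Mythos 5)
Your proof is correct and follows essentially the same route as the paper: expand $\hat A^T J \hat A - J = A^T J E + E^T J A + E^T J E$ using symplecticity of $A$, bound by $(2\epsilon+\epsilon^2)\normtwo{A}^2$, establish $\normtwo{\hat A}\ge(1-\epsilon)\normtwo{A}>0$, and absorb the factor $(1-\epsilon)^{-2}$ into the ${\cal O}(\epsilon^2)$ term. No substantive differences.
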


\medskip

\begin{proof}
We begin by proving that $\normtwo{\hat A}>0$ for  $0< \epsilon <1$. Note that
$\normtwo{A+E} \ge \normtwo{A}-\normtwo{E}$. This together with (\ref{normE}) leads to
\begin{equation}\label{normhat}
\normtwo{\hat A} \ge (1-\epsilon) \normtwo {A}>0,
\end{equation}
hence  $\hat A \neq 0$.

It remains to estimate $\Delta (\hat A)$. For simplicity of notation, we define
\[ 
F=(A+E)^T J (A+E)-J.
\]  
Since $A$ is symplectic, we get $A^TJA-J=0$, hence $F=A^TJ E+E^T JA+E^T JE$. Taking norms we obtain
\[
\Delta (\hat A)=\normtwo{F} \leq 2 \normtwo{A} \normtwo{E} + {\normtwo{E}}^2.
\]

Applying (\ref{normE}) yields 
\begin{equation}\label{bound1}
\Delta (\hat A) \leq  \normtwo{A}^2 \, \, (2 \epsilon+\epsilon^2).
\end{equation}

From  (\ref{normE}) we deduce that $\normtwo{\hat A}=\normtwo{A} (1+\beta)$, where $|\beta| \leq \epsilon$. 
This together with  (\ref{normE}) and (\ref{bound1}) gives
\[
\Delta (\hat A) \leq \normtwo{\hat A}^2 \, \frac{(2 \epsilon+\epsilon^2)}{(1-\epsilon)^2},
\]
which  completes the proof.
\end{proof}

\medskip

According to (\ref{boundE})  we  introduce the loss of symplecticity (relative error) of  nonzero matrix $A \in \mathbb R^{2n \times 2n}$  as 
\begin{equation}\label{sympA}
sympA = \frac{\normtwo{A^TJA-J}}{\normtwo{A}^2}.
\end{equation}

\medskip

\begin{remark}\label{remark3}
Assume that $A$ is symplectic. Then  we have $A^TJA=J$, so taking norms  we obtain  
\[  
1=\normtwo{J} \leq \normtwo{A^T} \normtwo{J} \normtwo{A} ={\normtwo{A}}^2.
\] 

We see that $\normtwo{A} \ge 1$ for every symplectic matrix $A$.
Therefore, under  the hypotheses of Lemma \ref{lemma8} and applying (\ref{normhat}) we get the inequality
\begin{equation}\label{deltasymp}
\Delta (\hat A) \ge  (1-\epsilon)^2  \,\, {\normtwo{A}}^2  \,\, symp {\hat A}. 
\end{equation} 

If   $\normtwo{A}$ is large and $\hat A$ is close to $A$, then $symp {\hat A} << \Delta (\hat A)$.   This property is highligted in our numerical experiments in Section $4$.
\end{remark}
\medskip

\begin{proposition}\label{thm symp}
Let  $\tilde L  \in \mathbb R^{2n \times 2n}$  be  the computed factor of the symplectic factorization   $A=LL^T$, where 
$A \in \mathbb R^{2n \times 2n}$  is a symmetric positive definite symplectic matrix. 

Define 
\begin{equation}\label{def F}
F={\tilde L}^T J \tilde L - J. 
\end{equation}

Partition $\tilde L$ and $F$  conformally  with $J$  as
\begin{equation}\label{L and F}
\tilde L=\left(
\begin{array}{cc}
 \tilde L_{11} &   0 \\
 \tilde L_{21} & \tilde   L_{22}
\end{array}
\right),
\quad 
F=\left(
\begin{array}{cc}
 F_{11} &  F_{12} \\
 F_{21} & F_{22}
\end{array}
\right).
\end{equation}

Then $F_{21}=-{F_{12}}^T$,  $F_{22}=0$ and 
\begin{equation}\label{Fij}
F_{11}= {\tilde L_{11}}^T  \tilde L_{21}-{\tilde L_{21}}^T \tilde L_{11}, \quad  F_{12}={\tilde L_{11}}^T  \tilde L_{22}-I_n.
\end{equation}

Moreover,  the loss of symplecticity $\Delta (\tilde L)$ can be bounded as follows
\begin{equation}\label{boundF}
\max \{\normtwo{F_{11}}, \normtwo{F_{12}} \} \leq \Delta (\tilde L) \leq 2 \, \max \{\normtwo{F_{11}}, \normtwo{F_{12}}\}.
\end{equation}
\end{proposition}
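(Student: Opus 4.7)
The plan is to carry out a direct block-multiplication computation to identify the entries of $F$, and then to bound $\|F\|_2$ above and below using standard block-matrix norm inequalities. The computational part is routine once the block product $\tilde L^T J\tilde L$ is written out, while the norm inequalities require a small observation about the structure of $F$.

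First I would compute $\tilde L^T J \tilde L$ using the block partition (\ref{L and F}). Writing
\[
J\tilde L=\begin{pmatrix} \tilde L_{21} & \tilde L_{22} \\ -\tilde L_{11} & 0 \end{pmatrix},
\]
one multiplication gives
\[
\tilde L^T J\tilde L=\begin{pmatrix} \tilde L_{11}^T\tilde L_{21}-\tilde L_{21}^T\tilde L_{11} & \tilde L_{11}^T\tilde L_{22} \\ -\tilde L_{22}^T\tilde L_{11} & 0 \end{pmatrix}.
\]
Subtracting $J$ yields immediately $F_{22}=0$, the identities (\ref{Fij}), and $F_{21}=I_n-\tilde L_{22}^T\tilde L_{11}=-(\tilde L_{11}^T\tilde L_{22}-I_n)^T=-F_{12}^T$. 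Note also that $F_{11}$ is skew-symmetric, which is consistent with $F=\tilde L^T J\tilde L-J$ being skew-symmetric.

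For the lower bound in (\ref{boundF}), I would use the standard fact that a submatrix cannot have larger spectral norm than the ambient matrix: $F_{11}=[I_n,\,0]\,F\,[I_n,\,0]^T$ and $F_{12}=[I_n,\,0]\,F\,[0,\,I_n]^T$, so both $\|F_{11}\|_2$ and $\|F_{12}\|_2$ are bounded by $\|F\|_2=\Delta(\tilde L)$.

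The main small obstacle is the upper bound, since naive triangle-inequality arguments applied to four blocks give a factor of $4$ or $\sqrt 3$ rather than $2$. I would split
\[
F=\begin{pmatrix} F_{11} & 0 \\ 0 & 0 \end{pmatrix}+\begin{pmatrix} 0 & F_{12} \\ -F_{12}^T & 0 \end{pmatrix},
\]
observe that the first summand has spectral norm exactly $\|F_{11}\|_2$, and that the block skew-symmetric second summand has singular values equal to those of $F_{12}$ taken with multiplicity two (this follows because its square is $\mathrm{diag}(-F_{12}F_{12}^T,-F_{12}^TF_{12})$), so its spectral norm equals $\|F_{12}\|_2$. The triangle inequality then yields $\Delta(\tilde L)=\|F\|_2\leq \|F_{11}\|_2+\|F_{12}\|_2\leq 2\max\{\|F_{11}\|_2,\|F_{12}\|_2\}$, completing (\ref{boundF}).
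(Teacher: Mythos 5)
Your proposal is correct and follows essentially the same route as the paper: the same block computation of $\tilde L^T J\tilde L$, the same lower bound via submatrix norms, and the identical splitting $F=F_1+F_2$ with $\normtwo{F_1}=\normtwo{F_{11}}$ and $\normtwo{F_2}=\normtwo{F_{12}}$ followed by the triangle inequality. The only difference is that you spell out the details (the explicit block product and the singular-value argument for $\normtwo{F_2}=\normtwo{F_{12}}$) that the paper dismisses as ``easy to check'' and ``obvious.''
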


\medskip

\begin{proof}
It is easy to check that  $F$ is a skew-symmetric matrix satisfying (\ref{Fij}), with $F_{22}=0$. 
Notice that $\Delta (\tilde L)= \normtwo{F}$. 
It remains to prove (\ref{boundF}). 

Write $F$ in a form $F=F_1+F_2$, where
\[
F_1=\left(
\begin{array}{cc}
 F_{11} &  0 \\
 0 & 0
\end{array}
\right),
\quad 
F_2=\left(
\begin{array}{cc}
 0 &  F_{12} \\
-{F_{12}}^T & 0
\end{array}
\right).
\]

It is obvious that  $\normtwo{F_1}=\normtwo{F_{11}}$ and  $\normtwo{F_2}=\normtwo{F_{12}}$. 
By property of 2-norm, it follows that  $\normtwo{F_{ij}} \leq \normtwo{F}$ for all $i, j=1,2$. 

On the other hand,  we get
\[
\normtwo{F} \leq \normtwo{F_1}+\normtwo{F_2} \leq 2 \,  \max \{\normtwo{F_{1}}, \normtwo{F_{2}}\}.
\]

This completes the proof.
\end{proof}

\medskip

\begin{remark}\label{remark4}
If  Algorithm $W_1$  runs to completion in floating-point arithmetic,  then  $\tilde L_{22}={\tilde L_{11}}^{-T} + {\cal O}(\eps)$, where $\eps$ is  machine precision.
See  \cite{higham:2002}, pp. 263--264, where  the detailed error analysis of methods for inverting triangular matrix was given. 
Notice that   $\normtwo{F_{12}}$ defined by (\ref{Fij}) depends only on conditioning of $A_{11}$, the submatrix of $A$.  Since $A$ is symmetric positive definite it follows that 
$\kappa_2(A_{11}) \leq \kappa_2(A)$.  However, the loss of simplecticity of   $\tilde L$ from Algorithm $W_2$ can be much larger than for Algorithm $W_1$, see our examples presented in Section \ref{tests}.

Notice  that  $F_{11}$  defined by (\ref{Fij}) remains  the same for both Algorithms $W_1$ and $W_2$. 
 \end{remark}

\medskip

Now we explain what we mean by {\em numerical stability} of algorithms for  computing $LL^T$ factorization.  

The precise definition is the following.

\begin{definition}\label{stab}
An algorithm $W$ for computing the $LL^T$ factorization of a given symmetric positive definite matrix $A \in \mathbb R^{2n \times 2n}$ is {\em numerically stable}, if the computed matrix
$\tilde L \in \mathbb R^{2n \times 2n}$, partitioned as in (\ref{L and F}),  is the exact factor of the $LL^T$ factorization of a  slightly perturbed matrix $A+\delta A$, with $\normtwo{\delta A}\leq \eps c \normtwo{A}$, where $c$ is a small constant depending upon $n$, and $\eps$ is machine precision. 
\end{definition}

In practice,   we can compute  the decomposition error
\begin{equation}\label{dec}
{dec}= \frac{\normtwo{A-\tilde L {\tilde L}^T}}{\normtwo{A}}.
\end{equation}

If $dec$ is of order $\eps$ then this is the best result we can achieve in floating-point arithmetic. 
We emphasize that here we apply  numerically stable Cholesky decomposition  of  symmetric positive definite matrix $A_{11}$ (see  Theorem 10.3 in \cite{higham:2002}, p. 197), and also numerically stable Reverse Cholesky decomposition of the Schur complement $S$ (defined by (\ref{S2})) applied in   Algorithm $W_2$. Notice that Lemma \ref{lemma5} implies that $\kappa_2(S)=\kappa_2(A_{11})$. For general symmetric positive definite matrix $A$ we have  a weaker bound: $\kappa_2(S) \leq \kappa_2(A)$, see \cite{Demmel}. 

\medskip

\section{Numerical experiments}\label{tests}

In this section we present numerical tests that show the comparison of Algorithms $W_1$ and $W_2$.  
All tests were performed in  \textsl{MATLAB} ver. R2021a,  with machine precision $\eps \approx 2.2 \cdot 10^{-16}$.

We report the following statistics:
\begin{itemize}
\item $\Delta(A) = \normtwo{A^TJA-J}$ (loss of symplecticity (absolute error) of $A$),
\item ${sympA}= \frac{\normtwo{A^TJA-J}}{\normtwo{A}^2}$ (loss of symplecticity (relative error) of $A$),
\item ${dec}_{Algorithm}= \frac{\normtwo{A-\tilde L {\tilde L}^T}}{\normtwo{A}}$ (decomposition error),
\item ${\Delta L}_{Algorithm}= \normtwo{\tilde L^TJ \tilde L-J}$ (loss of symplecticity (absolute error) of $\tilde L$),
\item ${sympL}_{Algorithm}= \frac{\normtwo{{\tilde L}^TJ\tilde L-J}}{\normtwo{\tilde L}^2}$ (loss of symplecticity (relative error) of $\tilde L$),
\item $\normtwo{F_{11}}$ and  $\normtwo{F_{12}}$ defined by (\ref{def F})--(\ref{Fij}).
\end{itemize}

\medskip

\begin{example}\label{example1}

In the first experiment we take  $A= S^T\, S$, where $S$ is a symplectic matrix, which was also used in   \cite{Benzi} and  \cite{Tam}:
\begin{equation}\label{S4x4}
S=S(t)=\left(
\begin{array}{cccc}
\cosh{t}   &   \sinh{t} & 0 &   \sinh{t}\\
 \sinh{t} & \cosh{t} &  \sinh{t} & 0 \\
0 & 0 & \cosh{t}   &  - \sinh{t} \\
0 & 0 & -\sinh{t}   &   \cosh{t} \\
 \end{array}
\right),
\quad t \in  \mathbb R.
\end{equation}

The results are contained in Table $1$. %\ref{tabelka1}. Here we observe significant  differences in the values of decomposition  errors $dec$.
We see that Algorithm $W_1$ produces unstable result  $\tilde L$, opposite to Algorithm $W_2$.
\medskip

\noindent

\begin{table}[!h]\label{tabelka1}
%{\bf Table for genersymp5.m}
\caption{The results for Example \ref{example1} and $A=S^T \,S$, where $S$ is defined by (\ref{S4x4}).}

\medskip
\begin{center}
\begin{tabular}{lcccc}
\hline
 t & $\pi$ & $\frac{3}{2} \pi$ & $2 \pi$ &  $\frac{5}{2} \pi$\\
\hline 
$\kappa_2 (A)$ & 4.4738e+05 & 2.3991e+08 & 1.2848e+11 & 6.8988e+13 \\
$\kappa_2 (A_{11})$ & 2.8675e+05 & 1.5355e+08 & 8.2227e+10 & 4.4063e+13 \\
${dec}_{W_1}$ &  1.2107e-11 & 4.5114e-09 & 1.0703e-06 & 0.0012\\ 
${dec}_{W_2}$ & 8.4985e-17 & 1.0127e-16 & 8.1196e-17 & 5.6141e-17\\ 
${sympA}$ &  6.3656e-17 & 5.6888e-17 & 6.9038e-17  & 4.5934e-17\\
${sympL}_{W_1}$   & 2.7064e-16 & 7.6363e-14 & 1.2318e-12 & 1.3975e-10\\
${sympL}_{W_2}$ &  5.1473e-14 & 2.2866e-13 & 8.1877e-12 & 1.8220e-10\\
$\Delta (A)$ &  2.8478e-11  &  1.3648e-08  &  9.5688e-06  & 0.0032\\		
$\Delta L_{W_1}$ &  1.8102e-13 & 1.1828e-09 &    4.4153e-07 &   0.0012\\
$\Delta L_{W_2}$ &  4.2038e-11  & 3.5417e-09  & 1.0328e-06 &   0.0015\\
$\normtwo{F_{11}}$ & 1.7186e-13 & 1.1828e-09 &  4.4153e-07 &   0.0012\\
$\normtwo{F_{12}}$ \text{from  $W_1$}   &  5.6843e-14 &  0 & 0 &   0\\
$\normtwo{F_{12}}$  \text{from $W_2$}  & 4.2038e-11 & 3.1147e-09 & 8.7430e-07  & 9.5561e-04\\
 \hline
\end{tabular}
\end{center}
\end{table}
\end{example}

\medskip

\begin{example}\label{example2}

For comparison, in the second  experiment we use the same matrix $S$ and repeat the calculations for the inverse of $A$ from the Example \ref{example1}.
Since  $\kappa_2(A^{-1})=\kappa_2(A)$, we see that the condition numbers of  $A$ is  the same in both Examples $1$ and $2$. 
However, here $A_{11}$ is perfectly well-conditioned, opposite to the previous Example \ref{example1}.
The results are contained in Table $2$. %\ref{tabelka2}. It's not surprise that the results are significantly different!
Now Algorithm $W_1$ produces numericall stable result $\tilde L$, like Algorithm $W_2$. 
We observe that for large values of  $\Delta A$ (in the last columns of Tables $1$ and $2$) the loss of simplicticity of computed $\tilde L$ is significant. 
\medskip

\noindent 

\begin{table}\label{tabelka2}
%{\bf Table for genersymp5a.m}
\caption{The results for Example \ref{example2} and  $A=(S^T \,S)^{-1}$, where $S$ is defined by (\ref{S4x4}).}

\medskip
\begin{center}
\begin{tabular}{lcccc}
\hline
 t & $\pi$ & $\frac{3}{2} \pi$ & $2 \pi$ &  $\frac{5}{2} \pi$\\
\hline 
$\mathrm{\kappa_2 (A)}$  & 4.4738e+05 &   2.3991e+08 & 1.2848e+11 & 6.9042e+13\\
$\mathrm{\kappa_2 (A_{11})}$ & 5.0149 & 5.0006 & 5.0001 & 4.9996 \\
${dec}_{W_1}$ &  1.3751e-16 & 2.7434e-16 & 4.5207e-16 & 1.1685e-16\\ 
${dec}_{W_2}$        & 8.4985e-17 & 9.1095e-17 & 4.0598e-17 & 1.1892e-16\\ 
${sympA}$ &  5.7906e-17 & 6.3647e-17 & 1.1614e-16 & 6.4968e-17\\
${sympL}_{W_1}$  &  0 & 1.1744e-16 & 8.1196e-17 & 8.4218e-17\\
${sympL}_{W_2}$  & 4.9186e-14 & 4.6509e-13 & 2.0383e-11 & 1.5995e-10\\
$\Delta (A)$ &    2.8478e-11  & 1.3648e-08  &  9.5688e-06  &   0.0032\\
$\Delta L_{W_1}$ &   0 &    1.8190e-12 &   2.9104e-11 &    6.9849e-10\\
$\Delta L_{W_2}$ &  3.2899e-11  &  9.7380e-09  & 3.1494e-06 &   0.0011\\
$\normtwo{F_{11}}$ &  0 & 1.8190e-12 & 2.9104e-11 &    6.9849e-10\\
$\normtwo{F_{12}}$ \text{from $W_1$}   &  0 &   1.5701e-16 & 1.1102e-16 &   1.1102e-16\\
$\normtwo{F_{12}}$  \text{from $W_2$}  & 3.2899e-11 &  9.7380e-09 & 3.1494e-06 &    0.0011\\
\hline
\end{tabular}
\end{center}
\end{table}
\end{example}

\medskip

\begin{example}\label{example3}

Here $A(10 \times 10)$ is generated as follows
\begin{verbatim}
randn('state',0);
n=5; s=3;
A=gener_symp2(n,s)+t*hilb(2*n);
\end{verbatim}

Random matrices of entries are from the distribution $N(0,1)$. They were generated by \textsl{MATLAB} function "randn".
Before each usage the random number generator was reset to its initial state.

Here we use Lemmas \ref{lemma2}--\ref{lemma3} to create the following \textsl{MATLAB} functions: 
\begin{itemize}
\item function  for generating orthogonal symplectic matrix $Q(2n \times 2n)$:
\begin{verbatim}
function [Q] = orth_symp(n)
% [Q] = orth_symp(n)
%
[U,~]=qr(complex(randn(n),randn(n)));
C=real(U);
S=imag(U);
Q=[C, S;-S,C];
end
\end{verbatim}

\item and function   for generating symmetric positive definite symplectic matrix  $S(2n \times 2n)$
with prescribed condition number $\kappa_2(S)=10^{2s}$

\begin{verbatim}
function [S]=gener_symp2(n,s)
% function [S]=gener_symp2(n,s)
% S=U G U', where U is orthogonal symplectic matrix.
% G=diag(D,inv(D)), D=diag(d), d=(d_1,...,d_n).
% Here  cond(S)=cond(G)=10^(2s).
% 
d=flip(logspace(0,s,n));
g=[d,1./d]; G=diag(g);
U=orth_symp(n); 
S=U*G*U'; S=(S+S')/2;
end
\end{verbatim}
\end{itemize}

The results are contained in Table $3$. %\ref{tabelka3}.  We see that Algorithm $W_1$ suffers from significant backward error $dec$.  
However, the results of $\normtwo{F_{12}}$  from Algorithm $W_2$ are catastrophic in comparison with the values received from Algorithm $W_2$. 
Here $A_{11}$ is quite well-conditioned, but  the departure of $A$ from simplicticity conditions is very large.   

\medskip

\noindent 

\begin{table}\label{tabelka3}
%{\bf Table for genersymp7.m}
\caption{The results for Example \ref{example3}.}

\medskip
\begin{center}
\begin{tabular}{lcccc}
\hline
 t & $0$ & $10^{-6}$  & $\frac{1}{2}$ &  $1$\\
\hline 
$\kappa_2 (A)$ & 1.0000e+06 & 9.9990e+05 & 1.0215e+05 & 8.6253e+04 \\
 $\kappa_2 (A_{11})$ & 620.6887 & 620.6885 & 534.9257 & 470.6434 \\
 ${dec}_{W_1}$ &  9.0334e-17 & 3.9861e-09 & 0.0019 & 0.0037\\ 
${dec}_{W_2}$ & 6.8325e-17 & 6.4834e-17 & 7.7533e-17 & 5.9520e-17\\ 
 ${sympA}$ &  3.7658e-17 & 2.2200e-10 & 1.1103e-04  & 2.2211e-04\\  
 ${sympL}_{W_1}$   & 6.3637e-16 & 4.9454e-09 &0.0023  & 0.0043\\
 ${sympL}_{W_2}$ & 7.5755e-15 & 7.9023e-08 & 0.0081 & 0.0118\\
 $\Delta (A)$ &  3.7658e-11  & 2.2200e-04  & 111.0506  &  222.2034\\		
$\Delta L_{W_1}$ &   6.3637e-13 &  4.9454e-06 &  2.2976 &   4.3139\\
$\Delta L_{W_2}$ &  4.8860e-12  & 7.9023e-05  &  8.1264 &  11.7686\\
$\normtwo{F_{11}}$ &  6.2070e-13 &    4.9454e-06 & 2.2976 &  4.3139\\
 $\normtwo{F_{12}}$ \text{from $W_1$}   &  1.7850e-15 &     7.8830e-16 &  4.6245e-16 &  5.4574e-16\\
  $\normtwo{F_{12}}$  \text{from  $W_2$}  &    4.8426e-12 &     7.8922e-05 & 7.8910 &  11.1795\\
      \hline
\end{tabular}
\end{center}
\end{table}
\end{example}

\medskip

\begin{example}\label{example4}

Now we apply Lemma \ref{lemma4} for creating our test matrices. We take
$A=PDP^T$, where 
\[
P=\left(
\begin{array}{cc}
 I_n &   0 \\
 C &  I_n
\end{array}
\right),
\quad 
D=\left(
\begin{array}{cc}
 G &   0 \\
 0 &  G^{-1}
\end{array}
\right),
\]
 
where $C$ is the Hilbert matrix and $\mathcal{B}$ is {\bf beta matrix}. 

Here $\mathcal{B}=\left(\frac{1}{\beta(i,j)}\right)$, where $\beta(\cdot,\cdot)$ is the $\beta$ function. 

By definition, 
	\[
	\beta(i,j)=\frac{\Gamma(i)\Gamma(j)}{\Gamma(i+j)},
	\]
where $\Gamma(\cdot)$ is the {\it Gamma function}.

$\mathcal{B}$ is symmetric totally positive matrix of integer.  More detailed information related to {\bf beta  matrix}  can be found in  \cite{Grover} and  \cite{higham:2021}.

Note  that generating  $A$ requires computing the inverse of the ill-conditioned Hilbert matrix. It  influences significantly on the quality of computed results in floating-point arithmetic.

The results are contained in Table $4$. % \ref{tabelka4}. The quality of the results  is similar to that given in the previous examples.

\medskip

\noindent 

\begin{table}\label{tabelka4}
%{\bf Table for Dopico1.m}
\caption{The results for Example \ref{example4}.}

\medskip
\begin{center}
	\begin{tabular}{lcccc}
		\hline
		n & 10 &16 & 20 & 24\\
		\hline
		$\kappa_2 (A)$ &1.1262e+06 &   6.2776e+09 &  1.9056e+12 &  5.6578e+14\\
		$\kappa_2 (A_{11})$ &  5.6043e+04 &  1.4639e+08 &  3.0158e+10 &  6.4618e+12\\
		${dec}_{W_1}$ &    6.3416e-17 &  1.0803e-12 &     2.8482e-11 &  1.5496e-10\\
		${dec}_{W_2}$  &    6.3329e-17  & 6.7428e-17 &  6.9001e-17 &   1.0661e-16\\
		${sympA}$ & 1.0928e-17  & 2.1256e-17 &  2.0100e-17 &  2.2716e-17\\
		${sympL}_{W_1}$ & 5.3818e-16  & 6.2006e-15&   4.3833e-14  & 3.3376e-13 \\
		${sympL}_{W_2}$ & 2.1743e-15 &  1.7424e-13 & 2.1389e-13 &  8.3257e-12 \\
		$\Delta (A)$ &  1.2307e-11  & 1.3344e-07  & 3.8304e-05  & 1.2844e-02\\		
		$\Delta L_{W_1}$ & 5.7112e-13 &  4.9129e-10 &  6.0509e-08 &  7.9364e-06\\
		$\Delta L_{W_2}$ & 2.3074e-12  & 1.3805e-08  & 2.9526e-07 &  1.9798e-04\\
		$\normtwo{F_{11}}$ &  5.6852e-13 &    4.9102e-10 &  6.0506e-08 & 7.9364e-06\\
           $\normtwo{F_{12}}$ \text{from $W_1$}   &  6.1156e-15 &  6.9449e-13 & 7.4234e-12 &   9.0109e-11\\
           $\normtwo{F_{12}}$  \text{from $W_2$}  &  2.2400e-12 &  1.3799e-08 & 2.8950e-07 & 1.9783e-04\\
        \hline
	\end{tabular}
\end{center}
\end{table}
\end{example}

\medskip

\begin{example}\label{example5}
The matrices $A(2n \times 2n)$ are  generated  for  $n=2:2:250$  by the following \textsl{MATLAB} code:
\begin{verbatim}
rand('state',0);  
randn('state',0);
d=rand(1,n);
U=orth_symp(n); 
g=[d,1./d]; G=diag(g);
A=U*G*U'; A=(A+A')/2;
\end{verbatim}

We applied  Lemma \ref{lemma3} for creating matrices of the form $A=UGU^T$, where $G$ is  a diagonal matrix,  and $U$ is an  orthogonal symplectic matrix, generated by the same  \textsl{MATLAB} function as in Example \ref{example3}.   

Figures  \ref{obrazek1}-\ref{obrazek3} illustrate the values of the statistics. We can see the differences between decomposition errors $dec$ (in favor of Algorithm $W_2$) and between the values $\Delta L$, in favor of Algorithm $W_2$. 

\medskip

\begin{figure}\label{obrazek1}
\includegraphics[width=6cm]{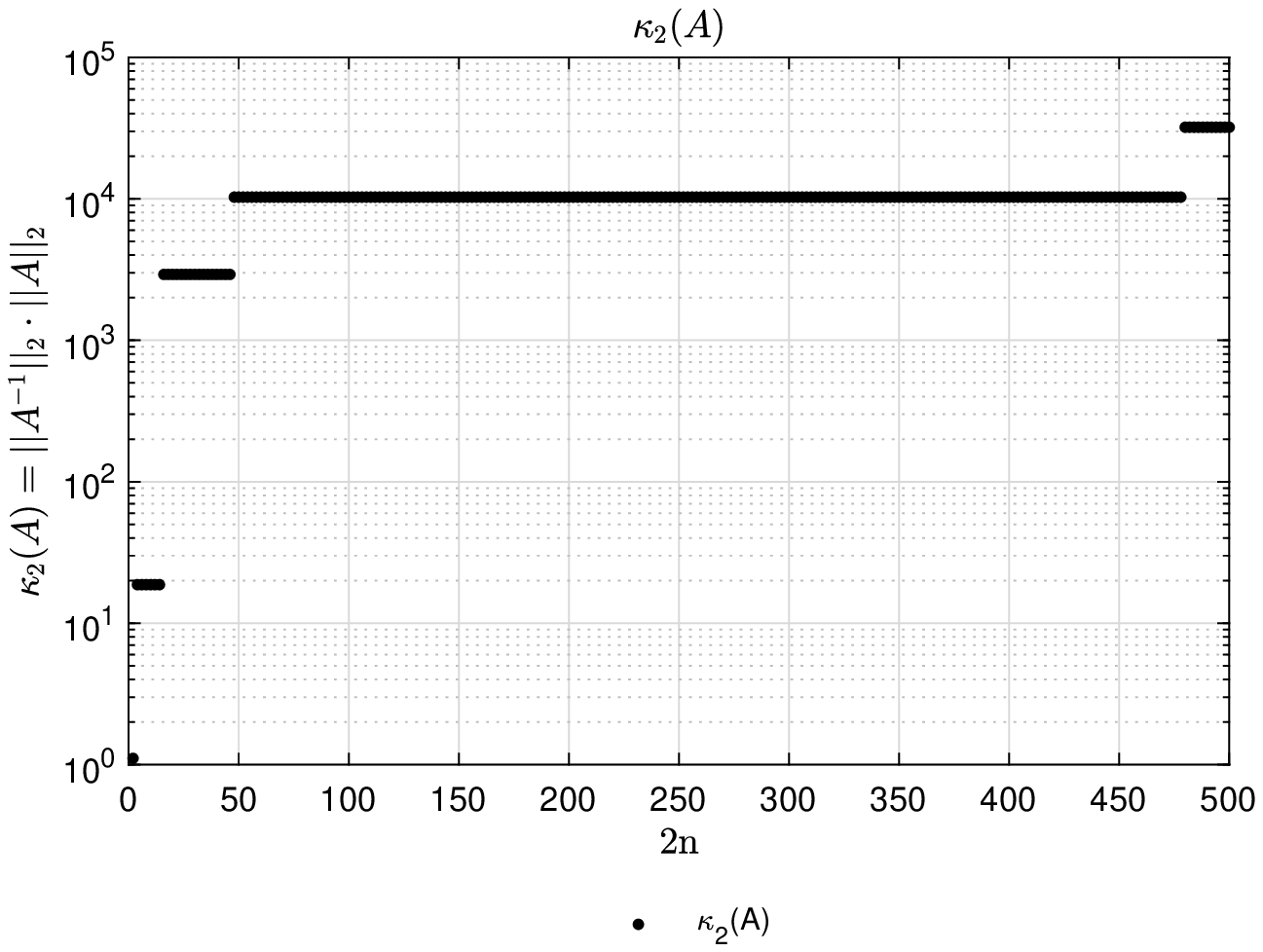}
\includegraphics[width=6cm]{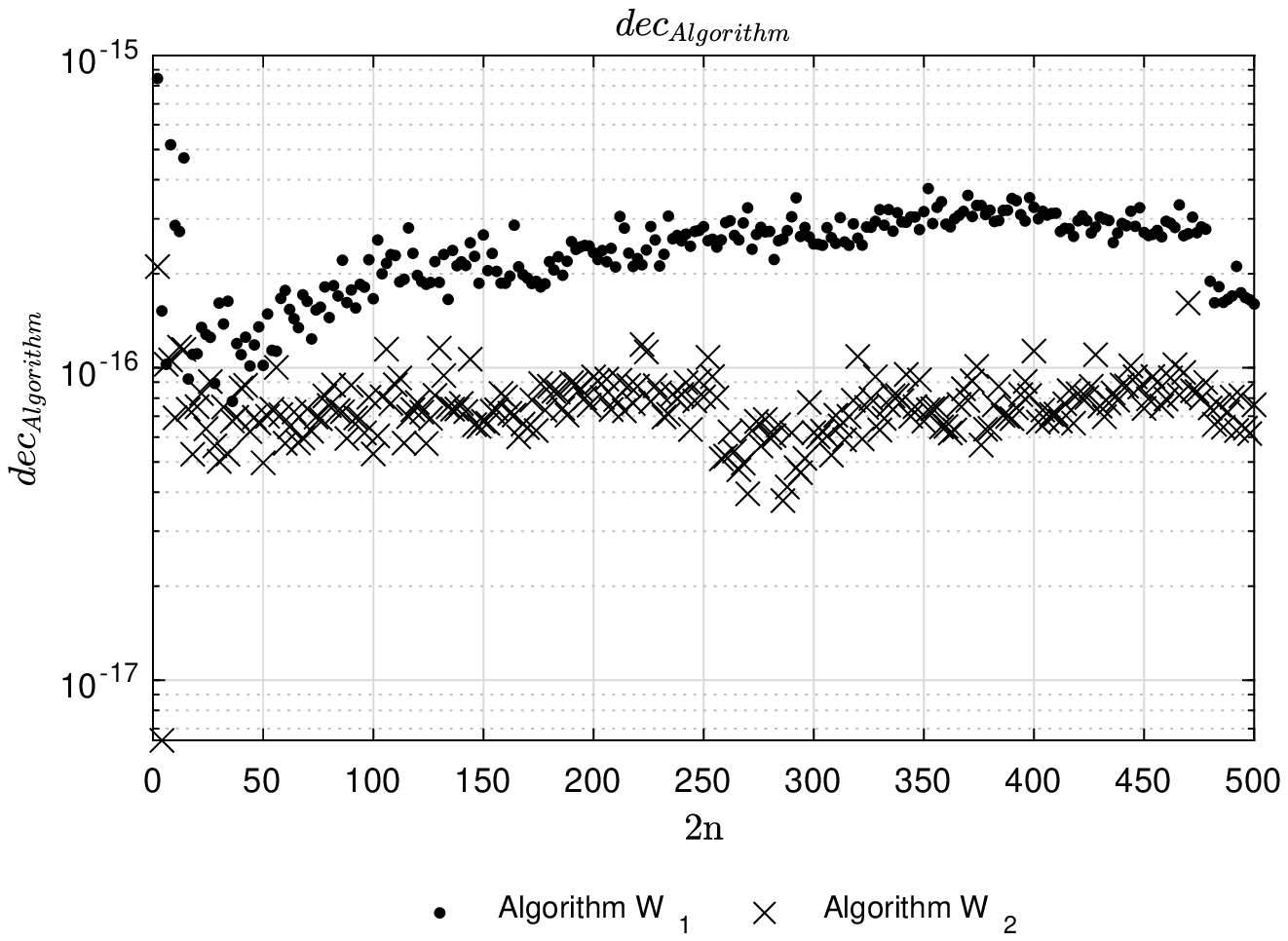}
\vskip-1.5ex
\caption{Condition numbers $\kappa_2(A)$ and decomposition errors for  Example \ref{example5}.}
\label{obrazek1}
\end{figure}

\begin{figure}\label{obrazek2}
\includegraphics[width=6cm]{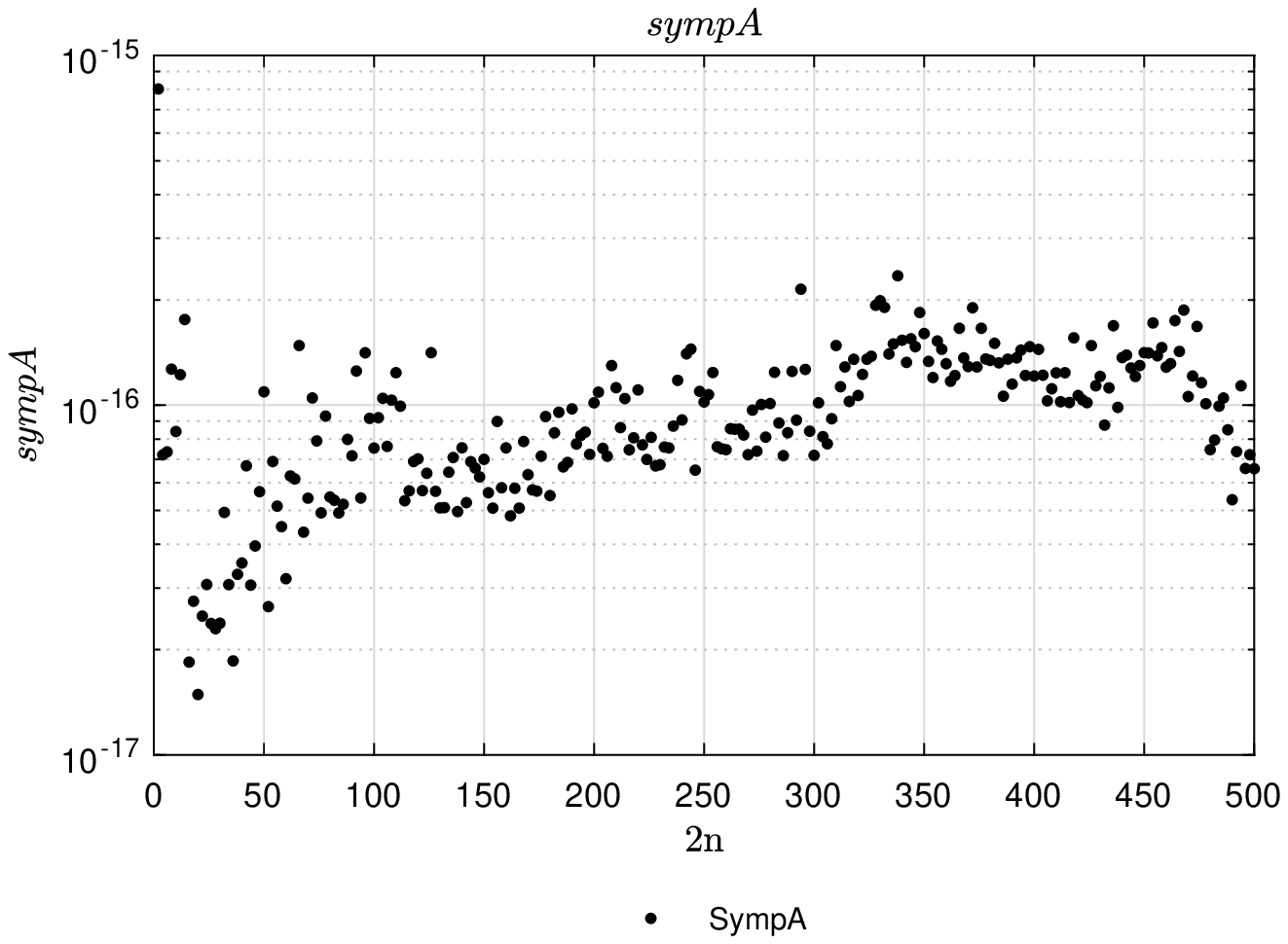}
\includegraphics[width=6cm]{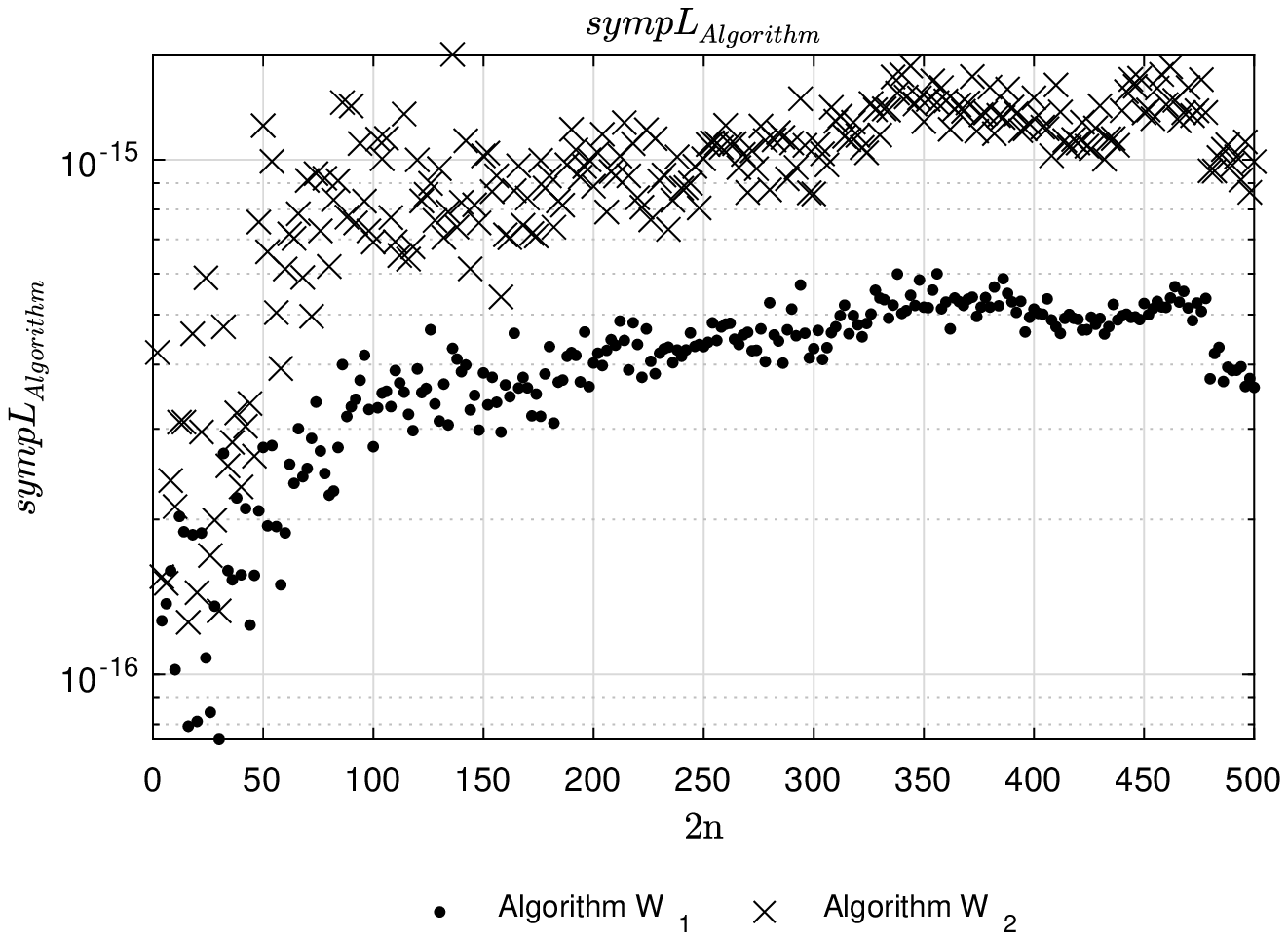}
\caption{The loss of symplecticity (relative errors)  for  Example \ref{example5}.}
\label{obrazek2}
\end{figure}

\vskip-1.5ex

\begin{figure}\label{obrazek3}
\includegraphics[width=6cm]{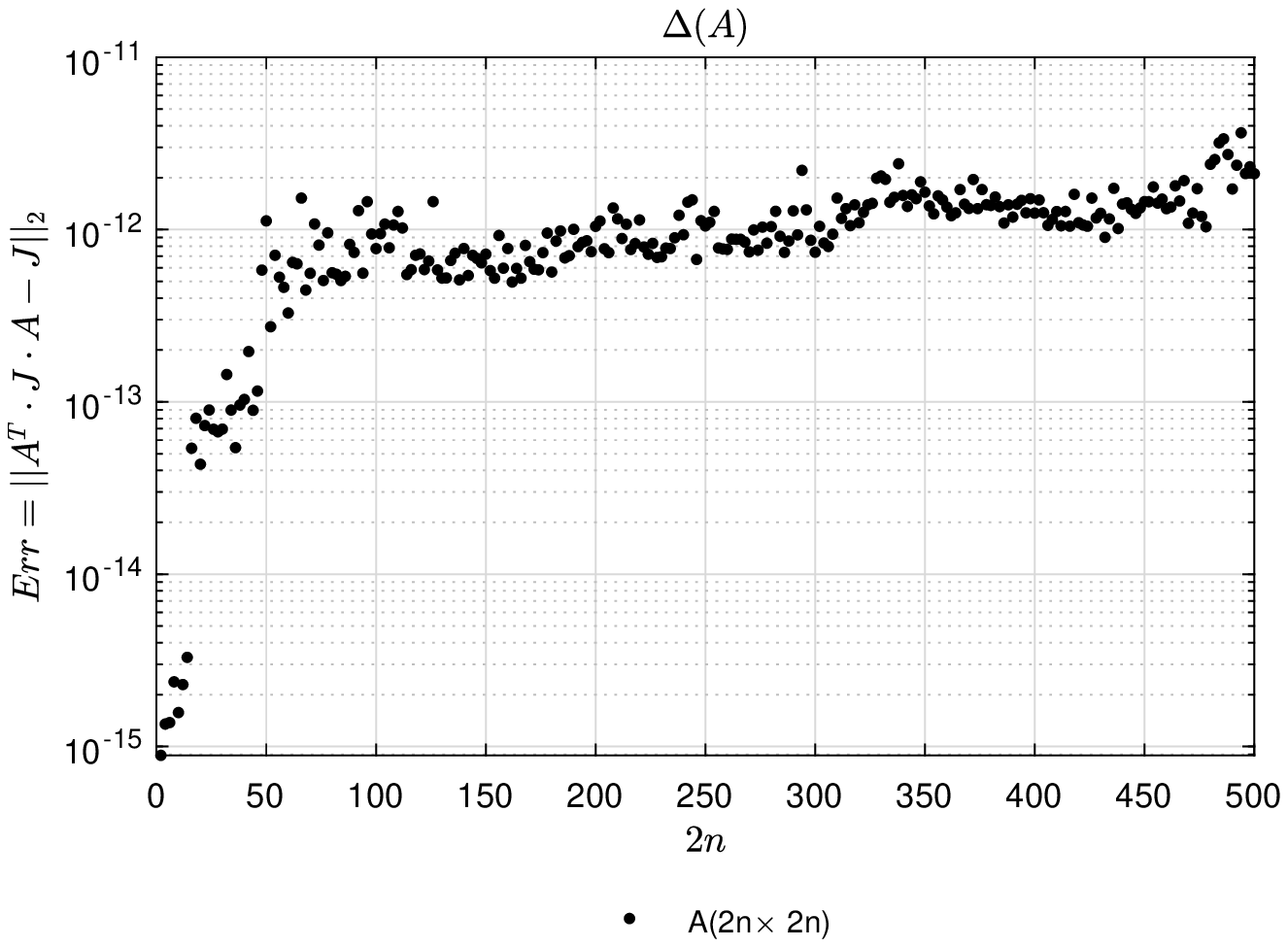}
\includegraphics[width=6cm]{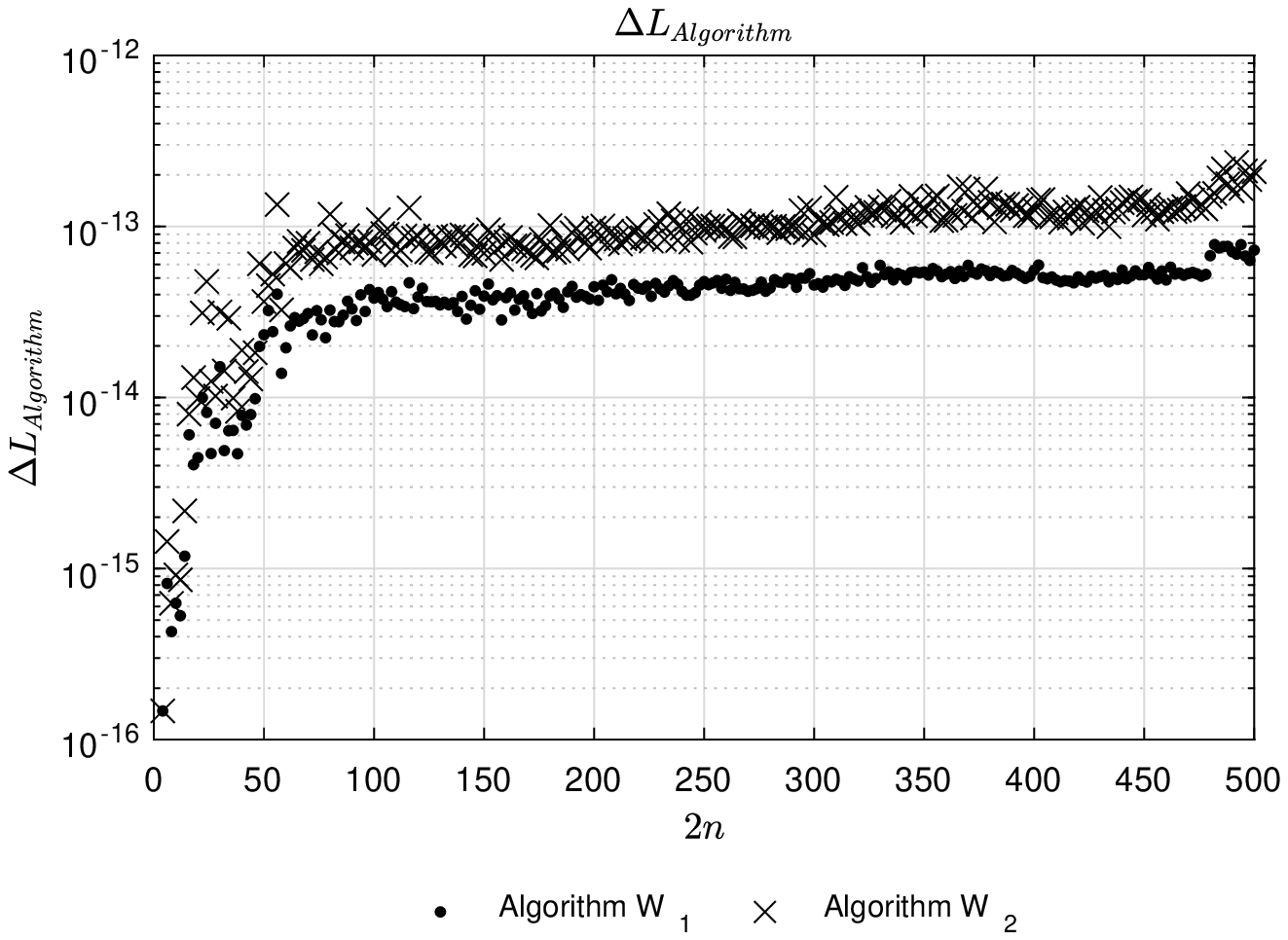}
\caption{The loss of symplecticity (absolute errors)  for  Example \ref{example5}.}
\label{obrazek3}
\end{figure}
\end{example}

\medskip

\section{Conclusions}

\begin{itemize}
\item We analyzed  two algorithms $W_1$ and $W_2$  for computing the symplectic $LL^T$ factorization of a given symmetric positive definite matrix $A(2n \times 2n)$. 
To assess their practical behaviour we performed numerical experiments. 

\item Algorithm $W_1$ is cheaper than Algorithm $W_2$. However, Algorithm $W_1$  is  unstable, in general, although  it works  very well for many test matrices.  The decomposition error (\ref{dec}) of the computed matrix $\tilde  L$  via Algorithm $W_1$  can be very  large.  In opposite,  in all our tests  Algorithm $W_2$ produces a numerically stable resulting matrices  $\tilde  L$ in floating-point arithmetic (in sense of Definition \ref{stab}).  Numerical stability of Algorithms $W_1$ and $W_2$   remains a topic of future work. 

\item Numerical tests presented  in Section  \ref{tests} give indication that  the loss of symplecticity  of  the computed matrix $\tilde  L$  from Algorithm $W_2$ can be much  larger  than obtained  from 
Algorithm $W_1$. We observe  that   the loss of symplecticity   of $\tilde L$ for both Algorithms $W_1$ and $W_2$ strongly depends on the distance from the simplicticity properties  (see Lemma \ref{lemma5}), and also on conditioning of  $A$ and its submatrix $A_{11}$. 
\end{itemize}

\medskip


\begin{thebibliography}{11}
\bibitem{Benzi} M.~Benzi, N.~Razouk, On the Iwasawa decomposition of a symplectic matrix, Appl. Math. Lett. 20 (2007), 260--265.

\bibitem{Demmel} J.~M.~Demmel, N.~J.~Higham, R.~S.~Schreiber, Stability of block LU factorization,  Numer. Linear Algebra Appl. 2(2) (1995), 173--190.

\bibitem{Dopico} F.~M.~Dopico, C.~ R.~Johnson,  Parametrization of the matrix symplectic group and applications, SIAM J. Matrix Anal. Appl. 31 (2) (2009), 650--673.   

\bibitem{Grover} P.~Grover, V.~S.~Panwar,  A.~S.~Reddy,  Positivity properties of some special matrices, Linear Algebra Appl.  596 (2020), 203--215.

\bibitem{higham:2002} N.~J.~Higham, {\it Accuracy and Stability of Numerical Algorithms}, Second Edition, SIAM, Philadelphia, 2002.

\bibitem{higham:2021}  N.~J.~Higham, M.~Mikaitis,  Anymatrix: an extensible MATLAB matrix collection,  Numer. Algorithms (2021),  1--22.

\bibitem{Tam} T.~Y.~Tam, Computing Iwasawa decomposition of a symplectic matrix by Cholesky factorization, Appl. Math. Lett. 19 (2006), 1421--1424.

\bibitem{Volker} Wen--Wei Lin, V.~Mehrmann, H.~Xu, Canonical forms for Hamiltonian and symplectic matrices and pencils, 
Linear Algebra Appl. 302--303 (1999),  469--533.

\bibitem{Xu}  H.~Xu,  An SVD-like matrix decomposition and its applications, Linear Algebra Appl. 368 (2003),  1--24.

\end{thebibliography}
\end{document}